\title{Symmetric products of Galois-Maximal varieties}
\author{Javier Orts}
\email{javier.orts@tecnico.ulisboa.pt}
\address{Departamento de Matemática\\
Instituto Superior Técnico\\
Av. Rovisco Pais\\
1049-001 Lisbon\\
Portugal
}
\date{August 2024} 
\newcommand\loccit{\emph{loc.~cit.}}
\newcommand\cf{\emph{cf.}}
\newcommand\ie{\emph{i.e.}}
\newcommand\etal{\emph{et~al.}}
\let\geq\relax
\let\geq\geqq
\let\leq\relax
\let\leq\leqq
\DeclareMathAlphabet{\mathscr}{LS1}{stixscr}{m}{n}
\DeclareMathOperator{\colim}{colim}
\newcommand{\pt}{pt}
\newcommand{\N}{\mathbf N}
\newcommand{\Z}{\mathbf Z}
\newcommand{\R}{\mathbf R}
\newcommand{\C}{\mathbf C}
\newcommand{\F}{\mathbf F}
\newcommand{\RO}{\mathrm{RO}}
\let\oldvarprojlim\varprojlim
\RenewDocumentCommand\varprojlim{e{^_}}{%
  {\oldvarprojlim\IfNoValueF{#2}{_{#2}}}\IfNoValueF{#1}{^{#1}}}
\def\mathcenterto#1#2{\mathclap{\phantom{#1}\mathclap{#2}}\phantom{#1}}
\let\old@widetilde\widetilde
\def\widetildeto#1#2{\mathcenterto{#2}{\old@widetilde{\mathcenterto{#1}{#2\,}}}}
\newcommand{\leftrarrows}{\mathrel{\raise.75ex\hbox{\oalign{%
  $\scriptstyle\leftarrow$\cr
  \vrule width0pt height.5ex$\hfil\scriptstyle\relbar$\cr}}}}
\newcommand{\lrightarrows}{\mathrel{\raise.75ex\hbox{\oalign{%
  $\scriptstyle\relbar$\hfil\cr
  $\scriptstyle\vrule width0pt height.5ex\smash\rightarrow$\cr}}}}
\newcommand{\Rrelbar}{\mathrel{\raise.75ex\hbox{\oalign{%
  $\scriptstyle\relbar$\cr
  \vrule width0pt height.5ex$\scriptstyle\relbar$}}}}
\def\leftrightarrowsfill@{\arrowfill@\leftrarrows\Rrelbar\lrightarrows}
\newcommand{\xleftrightarrows}[2][]{\ext@arrow 3399\leftrightarrowsfill@{#1}{#2}}
\newcommand{\hooklongrightarrow}{\lhook\joinrel\longrightarrow}
\DeclareRobustCommand\longtwoheadrightarrow{\relbar\joinrel\twoheadrightarrow}
\definecolor{light-gray}{gray}{0.95}
\numberwithin{equation}{section}
\theoremstyle{plain}
  \newtheorem{thm}{\protect\theoremname}[section]
\theoremstyle{plain}
  \newtheorem*{thm*}{\protect\theoremname}
\theoremstyle{plain}
\theoremstyle{definition}
  \newtheorem{ex}[thm]{\protect\examplename}
  \newtheorem{defi}[thm]{\protect\definitionname}
\theoremstyle{plain}
  \newtheorem{prop}[thm]{\protect\propositionname}
\theoremstyle{remark}
  \newtheorem{rem}[thm]{\protect\remarkname}
\theoremstyle{plain}
  \newtheorem{lem}[thm]{\protect\lemmaname}
  \newtheorem{prprty}[thm]{\protect\propertyname}
\renewenvironment{proof}[1][\proofname]{\par
  \pushQED{\qed}%
  \normalfont \topsep6\p@\@plus6\p@\relax
  \trivlist
  \item[\hskip\labelsep
        \itshape
    #1\@addpunct{.---}]\ignorespaces
}{%
  \popQED\endtrivlist\@endpefalse
}
\renewcommand{\qed}{\hfill$\blacksquare$}
  \providecommand{\corollaryname}{Corollary}
  \providecommand{\examplename}{Example}
  \providecommand{\lemmaname}{Lemma}
  \providecommand{\propositionname}{Proposition}
  \providecommand{\remarkname}{Remark}
  \providecommand{\theoremname}{Theorem}
  \providecommand{\definitionname}{Definition}
  \providecommand{\propertyname}{Property}
  \providecommand{\notationname}{Notation}
  \providecommand{\conjecturename}{Conjecture}
\begin{document}

\begin{abstract}
The main result of this paper is the proof that all the symmetric products of a (finite) Galois-Maximal space are also Galois-Maximal spaces.
This applies to the special case of real algebraic varieties, solving the problem first stated by Biswas and D'Mello in \cite{biswas&d'mello:symmetric_products_M-curves} about symmetric products of Maximal curves, and then generalised by Baird in \cite{baird:symmetric_products_GM-curves} to Galois-Maximal curves.
We also give characterisations of these spaces and state a new definition that generalises to a larger class of spaces.
Then, we extend the characterisation in terms of the Borel cohomology given in \cite{us} to the new family.
Finally, we introduce the notion of cohomological stability and cohomological splitting, provide a systematic treatment and relate them with the properties of being a Maximal or Galois-Maximal space.
These cohomological properties play an important role in the proof of our main theorem.
 \end{abstract}

\keywords{Real variety, Maximal variety, Galois-Maximal variety, Borel cohomology, Symmetric products}
\subjclass[2020]{14F45, 14P25, 55N91}

\maketitle

{\footnotesize\tableofcontents}

\section{Introduction}
\label{sec:introduction}

If $G$ is a $p$-group acting on a well-behaved space $X$, then Smith theory (\cf~\cite[Ch.~III]{bredon:introduction_compact_transformation_groups}) provides a relation between the cohomology of $X$ and of $X^G$ when considering coefficients in the field of $p$ elements $\F_p$.
In particular, for $G=C_2$, one has:
\begin{equation}
\sum_q \dim_{\F_2} H^q(X^{C_2};\F_2) \leq \sum_q \dim_{\F_2} H^q(X;\F_2).
\tag{S--T}
\label{eq:smith-thom}
\end{equation}
This inequality is known as the Smith-Thom inequality and defines \emph{Maximal spaces}: spaces for which the equality is attained.

If $\sigma\in C_2$ denotes the non-trivial element, then its pull-back $\sigma^*$ defines an action of $C_2$ on $H^*(X;\F_2)$.
This allows a refinement of the Smith--Thom inequality, sometimes referred to as the Harnack--Krasnov inequality:
\begin{equation}
\sum_q \dim_{\F_2} H^q(X^{C_2};\F_2) \leq \sum_q \dim_{\F_2} H^1(C_2, H^q(X;\F_2)).
\tag{H--K}
\label{eq:krasnov}
\end{equation}
Spaces for which the equality in this last relation is attained are referred to as \emph{Galois-Maximal spaces}.

Maximal and Galois-Maximal spaces are more common in real algebraic geometry, where the $C_2$-space is the complex locus of a real algebraic variety with the action defined by complex-conjugation.

A natural question to ask is what operations preserve these properties.
For example, it has been known for a long time that the Cartesian product of Maximal (resp.~Galois-Maximal) spaces is again a Maximal (resp.~Galois-Maximal) space.
Biswas and D'Mello considered in \cite{biswas&d'mello:symmetric_products_M-curves} the symmetric products of Maximal curves.
Soon after, Franz \cite{franz:gamma_products} addressed the general case of Maximal spaces, while Baird \cite{baird:symmetric_products_GM-curves} considered symmetric product of Galois-Maximal curves.
In all cases symmetric products preserved these properties.

In the work of this paper we deal with the general case of Galois-Maximal spaces, showing the following:

\begin{thm*}
Let $X$ be a Galois-Maximal space.
Then, $SP_nX$ is a Galois-Maximal space for every $n\in \N$.
\end{thm*}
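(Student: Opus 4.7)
My plan is to work via Borel equivariant cohomology, using the characterisation of Galois-Maximality stated earlier in the paper (the generalisation of \cite{us}) together with the cohomological splitting notion introduced in the paper. The first step is to translate the Galois-Maximality of $X$ into a structural statement about $H^*_{C_2}(X;\F_2)$ as a graded module over $\F_2[u]=H^*(BC_2;\F_2)$: concretely, a suitable splitting/freeness property. The strategy is then to verify that this property is inherited by symmetric products.

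Concretely, I would analyse the $C_2$-fixed locus via the stratification
\begin{equation*}
(SP_n X)^{C_2} \;=\; \bigsqcup_{j+2k=n} SP_j(X^{C_2}) \times SP_k\bigl((X\setminus X^{C_2})/C_2\bigr),
\end{equation*}
obtained by writing every invariant multiset as a disjoint union of genuine fixed points and of $\{x,\sigma(x)\}$-pairs. The K\"unneth formula together with Macdonald's generating function for the Poincar\'e series of ordinary symmetric products expresses $\sum_q\dim_{\F_2}H^q((SP_n X)^{C_2};\F_2)$ purely in terms of the Betti numbers of $X^{C_2}$ and of $(X\setminus X^{C_2})/C_2$. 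For the right-hand side of (\ref{eq:krasnov}), a Dold-Thom-Nakaoka type description of $H^*(SP_n X;\F_2)$ together with its equivariant refinement lets one express $\sum_q\dim_{\F_2}H^1(C_2,H^q(SP_n X;\F_2))$ in terms of the $\F_2[u]$-module $H^*_{C_2}(X;\F_2)$. Under the splitting hypothesis the two generating series (indexed by $n$) should match term by term. A cleaner route is to show directly that cohomological splitting is preserved by the symmetric product construction, using the cohomological stability results of the paper as an inductive handle, and then invoke the equivalence between splitting and Galois-Maximality for the class of spaces at hand.

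The main obstacle is this last step: transporting the splitting from $X$ to $SP_n X$. The K\"unneth formula handles the Cartesian product $X^n$ equivariantly without difficulty, but passing to the $\Sigma_n$-quotient intertwines the symmetric group action with the Galois action, so the Borel cohomology of $SP_n X$ is not literally the $\Sigma_n$-invariants of that of $X^n$. Controlling this interaction --- presumably through an induction on $n$ anchored by cohomological stability, together with a careful use of the transfer maps attached to the inclusions $SP_{n-1}X\hookrightarrow SP_n X$ --- is where the technical work concentrates, and is precisely what the cohomological stability/splitting formalism developed earlier in the paper is designed to streamline.
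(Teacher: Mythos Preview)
Your proposal follows a different route from the paper and, as written, does not close the gap you yourself identify. The direct Betti-number computation via the stratification of $(SP_nX)^{C_2}$ is delicate: the pieces $SP_j(X^{C_2})\times SP_k((X\setminus X^{C_2})/C_2)$ are only a set-theoretic partition, not a topological disjoint union (a pair $\{x,\sigma x\}$ can degenerate to a doubled fixed point), so Macdonald's formula does not apply immediately. More seriously, your plan for the right-hand side of \eqref{eq:krasnov} --- computing $\sum_q\dim H^1(C_2,H^q(SP_nX))$ from $\F_2[u]$-module data on $X$ --- is left at the level of a hope, and the ``cleaner route'' of showing that cohomological splitting is preserved by $SP_n$ runs straight into the $\mathfrak S_n$--$C_2$ interaction you flag as the obstacle. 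Nothing in the proposal explains how to overcome it.

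The paper sidesteps this entirely. The key ingredient you are missing is the zero-cycle characterisation (Theorem~\ref{thm:main_characterisation}): a finite GM-space $X$ satisfies $\F_2[X]\simeq_{C_2}\F_2[W]$ for an explicit model $W=\big(\bigvee_i S^{p_i,q_i}\big)\vee\big(\bigvee_j S^{r_j}\wedge(C_2)_+\big)$, and since $SP_n$ is a functor of zero-cycles (Lemma~\ref{lem:functor_0-cycles}) it suffices to treat $W$. For $W$ the argument goes \emph{up then down}: one first shows $SP_\infty W$ is GM by rewriting it as $\prod_i SP_\infty S^{p_i,q_i}\times\prod_j N^{C_2}SP_\infty S^{r_j}$ (Property~\ref{prpty:symmetric_product_multiplicative_induction}), expressing this as a union of finite GM-spaces $Z_n$ (Franz's theorem for the sphere factors, Example~\ref{ex:maximal_spaces}(\ref{item:multiplicative_induction}) for the norm factors), and invoking cohomological stability (Theorem~\ref{thm:cohomological_properties}(\ref{item:cohomological_stability})). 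One then descends to each $SP_nW$ using cohomological splitting in the reverse direction (Theorem~\ref{thm:cohomological_properties}(\ref{item:cohomological_splitting})). Thus the cohomological stability/splitting machinery is not used to push the splitting of $X$ through the $\mathfrak S_n$-quotient, as you suggest, but rather to transfer maximality between $SP_\infty W$ and $SP_nW$ once the model $W$ has made $SP_\infty$ computable.
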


In fact, we establish a new definition for Maximal and Galois-Maximal spaces that also applies to spaces for which the sums of \eqref{eq:smith-thom} or \eqref{eq:krasnov} are not defined.
We then extend the characterisation of these spaces in terms of Borel cohomology provided in \cite{us}.

Finally, we formulate a rigorous definition of two \emph{cohomological properties} based on some properties of the symmetric products.
Suppose given a sequence 
\begin{equation*}
X_0 \overset{f_0}{\longrightarrow} X_1 \overset{f_1}{\longrightarrow} X_2 \longrightarrow \cdots \longrightarrow X_n \overset{f_n}{\longrightarrow} X_{n+1} {\longrightarrow} \cdots
\label{eq:sequence_spaces_maps}
\end{equation*}
of spaces and maps in some category $\mathscr T$.
By a \emph{cohomological property} we mean a relation that (some of) the induced maps $f_n^*:H^*(X_{n+1})\to H^*(X_n)$ on cohomology posses with respect to some coefficients.
The inspiration for this idea comes from the results of Nakaoka \cite{nakaoka:cohomology_symmetric_products} and Steenrod \cite{steenrod:symmetric_products} about the cohomology of symmetric products.
We make the passage to $G$-spaces, focusing on the group $C_2$ and relating these properties with those of being a Maximal or Galois--Maximal space.
That will help us prove the previous theorem.

\subsection{Notation and conventions}
\label{subsec:conventions}

We use the notation $\bigcup_n X_n$ for the colimit of a tower of spaces.
More precisely, given a sequence
\[
X_0 \overset{f_0}{\longrightarrow} X_1 \overset{f_1}{\longrightarrow} X_2 \longrightarrow \cdots \longrightarrow X_n \overset{f_n}{\longrightarrow} X_{n+1} {\longrightarrow} \cdots
\]
where each map $f_n$ is a closed embedding, then
\[
\bigcup_n X_n \overset{def}{=} \varinjlim_n X_n,
\]
where the topology is the final topology with respect to the natural inclusions $j_n:X_n\to \colim_n X$: A set $U$ is open in the colimit if and only if each set $j_n^{-1}(U)$ is open in $X_n$.
When each space $X_n$ is a $G$-space, the union or colomit $X$ inherits a natural $G$-space structure, to which the inclusions $j_n:X_n\hookrightarrow X$ are equivariant.

We write $X_+$ to denote the pointed space obtained by adding an isolated point $+$ to the space $X$, $X\wedge Y$ to denote the smash product of the pointed spaces $(X,x_0)$ and $(Y,y_0)$,
\[
X\wedge Y = X\times Y\bigg/ X\vee Y,
\]
and, in the case of $G$-spaces, $X\simeq_G Y$ to indicate that there exists a homotopy equivalence between $X$ and $Y$ which is also equivariant.

Finally, we denote the Borel cohomology of the $G$-space $X$ by $H_G^*(X)$.
As it is a $\Z$-graded theory, we can consider shifted modules, that we call \emph{suspensions}, and that we denote by $\Sigma^qM$.
The reason for this is the suspension axiom of reduced Borel cohomology:
\[
\tilde{H}_G(S^p\wedge X)\cong \Sigma^p\tilde{H}_G^*(X).
\]
%
In particular, given an $\F_2$-vector space $V$, we denoted by $\Sigma^pV$ the $\Z$-graded $\F_2[z]$-module which is $V$ concentrated in degree $p$.

\section{Maximal spaces}
\label{sec:maximal_spaces}

As mentioned in the introduction, Smith theory only applies for classes of spaces that are well-behaved.
One such example is a finite \emph{$G$-CW-complex}.
These are the equivalent objects to CW-complexes in equivariant topology.
For the purpose of this work, it is enough to address only the case in which the equivariance group $G$ is finite.
We refer to \cite{peter_may:equivariant_homotopy_cohomology_theory} and \cite{matumoto:G-CW_complexes} for the general definition and their properties.

\begin{defi}
A \emph{$G$-CW-complex} is a CW-complex $X$ on which a finite group $G$ acts in such a way that the following hold:
\ \begin{enumerate}

\item The action of $G$ is cellular.

\item Invariant cells are point-wise fixed.

\end{enumerate}
A $G$-CW-complex $X$ whose underline CW-structure has finitely many cells will be called a \emph{finite $G$-CW-complex}.
The dimension of one of its top cells will be refer to as the dimension of $X$ and denoted $\dim X$.
\end{defi}

\begin{defi}
\label{def:maximal_spaces}
Let $X$ be a $C_2$-space with the equivariant homotopy type of a finite $C_2$-CW-complex.
We say that:
\begin{enumerate}

\item $X$ is a \emph{Maximal space} (also \emph{M-space}) if
\begin{equation}
\sum_{q=0}^{\dim X^{C_2}} \dim_{\F_2} H^q(X^{C_2};\F_2) = \sum_{q=0}^{\dim X} \dim_{\F_2} H^q(X;\F_2).
\label{eq:M}
\tag{M}
\end{equation}

\item $X$ is a \emph{Galois-Maximal space} (also \emph{GM-space}) if
\begin{equation}
\sum_{q=0}^{\dim X^{C_2}} \dim_{\F_2} H^q(X^{C_2};\F_2) = \sum_{q=0}^{\dim X} \dim_{\F_2} H^1(C_2, H^q(X;\F_2)).
\label{eq:GM}
\tag{GM}
\end{equation}

\end{enumerate}
A space satisfying either of these conditions will be generically called \emph{maximal} (without capital letters).
\end{defi}

\begin{rem}
\label{rem:equivariantly_formal_spaces}
Let $j$ denote the inclusion of the fibre $X\hookrightarrow X_{C_2}$.
It is a fact (\cf~\cite[III(1.18)]{tom_dieck:transformation_groups}), that being an $M$-space is equivalent to the forgetful map $j^*:H_{C_2}^*(X;\F_2) \to H^*(X;\F_2)$ being surjective.
Such spaces are sometimes referred to as \emph{totally nonhomologous to zero spaces} \cite{borel:seminar_transformation_groups} or \emph{equivariantly formal spaces} \cite{franz:gamma_products}.
\end{rem}

The term $H^1(C_2,A)$ in \eqref{eq:GM} refers to the group cohomology of $C_2$ with coefficients in $A$.
We are interested in the case when $A$ is an $\F_2$-vector space equipped with an involution $\sigma$, for which
\[
H^p(C_2,A)=A^{C_2}\Big/ \left\{a+\sigma a \mid a\in A\right\}
\]
(\cf~\cite[pp.~58--59]{brown:cohomology_of_groups}).
Hence $\dim_{\F_2} H^1(C_2,A) \leq \dim_{\F_2}A$.
In particular
\begin{equation*}
\sum_{q=0}^{n}\dim_{\F_{2}}H^{1}(C_2,H^{q}(X;\F_{2}))
 \leq \sum_{q=0}^{n}\dim_{\F_{2}}H^{q}(X;\F_{2}),
\label{eq:GM-variety_M-variety}
\end{equation*}
which means that every Maximal space is Galois-maximal.
The necessary and sufficient conditions under which the converse holds are given in the following statement. 

\begin{prop}[Krasnov, \cite{krasnov:harnack-thom_inequalities}]
\label{prop:krasnov}
A Galois-Maximal space $X$ is a Maximal space if and only if the induced action of $C_2$ on the cohomology $H^*(X;\F_2)$ is trivial.
\end{prop}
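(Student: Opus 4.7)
The plan is to reduce the statement to a simple piece of linear algebra over $\F_2$, applied degree by degree to $H^*(X;\F_2)$.

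First I would fix an $\F_2$-vector space $A$ with an involution $\sigma$ and analyse the operator $1+\sigma\colon A\to A$. Since we work over $\F_2$, the identity $(1+\sigma)^2 = 1 + \sigma^2 = 0$ shows that $\im(1+\sigma) \subseteq \ker(1+\sigma) = A^{C_2}$. From the formula recalled in the excerpt,
\[
H^1(C_2,A) = A^{C_2}\big/\{a+\sigma a : a\in A\} = \ker(1+\sigma)/\im(1+\sigma),
\]
the rank–nullity theorem applied to $1+\sigma$ yields
\[
\dim_{\F_2} H^1(C_2,A) = \dim_{\F_2} A - 2\dim_{\F_2}\im(1+\sigma).
\]
In particular $\dim_{\F_2}H^1(C_2,A) \leq \dim_{\F_2} A$, with equality if and only if $\im(1+\sigma) = 0$, i.e.\ if and only if $\sigma$ acts trivially on $A$.

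Next I would apply this with $A = H^q(X;\F_2)$, equipped with the action induced by $\sigma^*$, for each $q$. Summing over $q$ gives
\[
\sum_{q} \dim_{\F_2} H^1(C_2, H^q(X;\F_2)) \leq \sum_{q} \dim_{\F_2} H^q(X;\F_2),
\]
again with equality exactly when $\sigma^*$ acts as the identity on $H^*(X;\F_2)$.

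Finally, combining this with the hypothesis that $X$ is Galois-Maximal (equation \eqref{eq:GM}), the condition that $X$ be Maximal (equation \eqref{eq:M}) becomes equivalent to the equality of the two sums above. By the previous paragraph, this holds if and only if $\sigma^*$ acts trivially on $H^q(X;\F_2)$ for every $q$, proving the equivalence. There is no real obstacle here beyond the small linear-algebra computation for $1+\sigma$, which is clean because we are working in characteristic $2$.
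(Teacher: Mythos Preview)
Your argument is correct and is precisely the linear-algebra computation the paper has in mind: the paper records the formula $H^1(C_2,A)=A^{C_2}/\{a+\sigma a\}$ and the resulting inequality $\dim_{\F_2}H^1(C_2,A)\leq\dim_{\F_2}A$ just before the proposition, then attributes the equality criterion to Krasnov without further proof. Your rank--nullity step for $1+\sigma$ (valid since each $H^q(X;\F_2)$ is finite-dimensional for a finite $C_2$-CW-complex) is exactly what is needed to complete that sketch.
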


Maximal spaces are more common in real algebraic geometry, where one speaks about \emph{maximal varieties}.
If $V$ is a real variety, \ie, an algebraic variety over the field of real numbers, then its set of complex points $V(\C)$ endowed with the analytic topology and with the $C_2$-action induced from complex conjugation is a finite $C_2$-CW-complex (\cf~\cite[p.~131]{mangolte:real_algebraic_varieties}).
For non-singular, complete varieties the fixed points of this action are exactly the real points of the variety, \ie, $V(\C)^{C_2}\equiv V(\R)$.

\begin{defi}
\label{def:maximal_variety}
Let $V$ be a complete real variety.
We call $V$ a \emph{Maximal variety} or \emph{M-variety} (resp.~\emph{Galois-Maximal variety} or \emph{GM-variety}) if the $C_2$-space $V(\C)$ with the analytic topology as explained before is a Maximal space (resp.~Galois-Maximal space).
Maximal varieties of dimension one are called maximal curves, maximal varieties of dimension two are called maximal surfaces, etc.
\end{defi}

%

\begin{ex}[Maximal spaces]
\label{ex:maximal_spaces}
\ \begin{enumerate}

\item Let $\R^{p,q}$ denote the real $p$-dimensional representation of $C_2$ such that
\[
\sigma\cdot (x_1,\dots,x_p)=(x_1,\dots,x_{p-q},-x_{p-q+1},\dots,-x_p),
\]
and let $S^{p,q}=\R^{p,q}\cup\{\infty\}$ be its one-point compactification.
We called spheres of this form \emph{representation spheres}.
All representation spheres are Maximal spaces.
\label{item:representation_spheres}

\item The Cartesian product and the wedge sum of two maximal spaces, equipped with the diagonal action, are also maximal.
%
\label{item:wedge}

\item If $X$ is a topological space, the its additive induction is the $C_2$-space $L^{C_2}X\overset{def}{=}X\wedge(C_2)_+$, where $C_2$ is the free orbit and $\wedge$ is the smash product of Subsection \ref{subsec:conventions}.
Whenever $X$ has the homotopy type of a finite CW-complex, $N^{C_2}X$ is a Galois-Maximal space.
\label{item:additive_induction}

\item Similarly, there exists a \emph{multiplicative induction}, denoted by $N^{C_2}X$, and whose underlying space is the product $X\times X$, with the action of the non-trivial element $\sigma$ given by transposition:
\[
\sigma\cdot(x,x') = (x',x).
\]
Multiplicative inductions of spaces with the homotopy type of a finite CW-complex are also Galois-Maximal spaces.
\label{item:multiplicative_induction}

%

\item Any Riemann surface with at least one real point is a Galois-Maximal curve (\emph{cf.}~Krasnov \cite{krasnov:harnack-thom_inequalities}).
Also, all Abelian varieties with real points are Galois-Maximal varieties (\cf~\loccit).

\item A smooth real threefold $Y\subset \mathbf{P}_\R^4$ containing a real line $L$, such as a real cubic, is  Galois-Maximal variety (\cf~\cite{us}).

\item The blow-up of a smooth Maximal variety along a smooth real Maximal subvariety is a Maximal variety (\cf~Fu \cite{fu23}).

\item If $V$ is a Maximal (resp.~Galois-Maximal) variety and $E\to V$ is a Real vector bundle, then the projective bundle $\mathbf{P}(E)\to V$ is an Maximal (resp.~Galois-Maximal) variety (\cf~Fu \cite{fu23} for the Maximal case).

\item A curve is Maximal if and only if its Picard variety is Maximal (\cf~Biswas and D'Mello \cite{biswas&d'mello:criterion_M-curves}).

\item Non-singular complete toric varieties are Maximal (\cf~Bihan \emph{et al.}~\cite{bihan_et_al:toric_varieties_M-varieties}).

\end{enumerate}
\end{ex}

The main problem that motivated this paper is the following result of Biswas and D'Mello proven in \cite{biswas&d'mello:symmetric_products_M-curves}.

\begin{thm}[Symmetric products of M-curves]
Let $X_g$ be a Maximal curve of genus $g$.
Then $SP_nX_g$ is a Maximal variety for $n=2,3$ and $n\geq 2g-1$.
\end{thm}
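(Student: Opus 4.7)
I would split the argument by the range of $n$ and use quite different tools in the two ranges. For $n \geq 2g - 1$ I would exploit the Abel--Jacobi morphism $\alpha_n\colon SP_n X_g \to \Pic^n(X_g)$. By Riemann--Roch, the hypothesis $n \geq 2g - 1$ forces $h^1(\mathcal O(D)) = 0$ for every effective divisor $D$ of degree $n$, so each fibre of $\alpha_n$ is the projective space $\mathbf P(H^0(\mathcal O(D)))$ of dimension $n - g$; this realises $SP_n X_g$ as the projectivisation $\mathbf P(E) \to \Pic^n(X_g) \cong J(X_g)$ of a real vector bundle $E$ of rank $n - g + 1$ obtained as the direct image of a suitably normalised Poincar\'e bundle. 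A Maximal curve attains the Harnack bound of $g + 1$ real components and in particular has real points, so the Biswas--D'Mello criterion \cite{biswas&d'mello:criterion_M-curves} implies $J(X_g)$ is Maximal, and then Fu's projective-bundle preservation result \cite{fu23} delivers that $SP_n X_g$ is Maximal.

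For the remaining cases $n = 2, 3$ (which lie outside the previous range once $g$ is large enough), I would carry out a direct cohomological comparison. On the variety side, Macdonald's classical formula
\[
\sum_n P_t(SP_n X_g)\, z^n = \frac{(1 + zt)^{2g}}{(1 - z)(1 - zt^2)}
\]
yields, for $n = 2$, $\sum_q b_q(SP_2 X_g; \F_2) = 2g^2 + 3g + 3$. On the fixed-point side, every real degree-two divisor is either a pair of real points or a complex conjugate pair $p + \bar p$, so
\[
(SP_2 X_g)^{C_2} = SP_2(X_g(\R)) \cup X_g(\C)/C_2,
\]
the two pieces being glued along the diagonal image of $X_g(\R)$. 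Since a Maximal curve is of dividing type, $X_g(\C)/C_2$ is a compact orientable genus-zero surface with $g + 1$ boundary circles, and $SP_2(X_g(\R))$ is the disjoint union of $\binom{g+1}{2}$ tori (pairs of distinct real components) together with $g + 1$ M\"obius bands $SP_2(S^1)$ (one per real component). Gluing the $g + 1$ M\"obius boundaries to the $g + 1$ boundary circles of $X_g(\C)/C_2$ produces, by an Euler-characteristic count, the closed non-orientable surface $N_{g+1}$, whose $\F_2$-Betti sum is $g + 3$; adding the tori gives $4\binom{g+1}{2} + (g + 3) = 2g^2 + 3g + 3$, matching Macdonald exactly.

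For $n = 3$ the same template applies: real cubic divisors stratify by type (three real points; one real point together with a complex conjugate pair), one identifies each stratum in terms of symmetric products of $X_g(\R)$ together with $X_g(\C)/C_2$, and a Mayer--Vietoris computation along the loci where the strata degenerate into each other produces an $\F_2$-Betti sum to be compared with Macdonald's. The main obstacle I foresee is precisely this gluing combinatorics for $n = 3$: the conjugate-pair stratum is now the product $X_g(\R) \times (X_g(\C)/C_2)$, meets the all-real-points stratum along a positive-dimensional sublocus of multiple-real-point divisors, and several Mayer--Vietoris sequences must be combined carefully. Once the stratification is made fully explicit, however, the verification reduces to a routine if somewhat intricate $\F_2$-rank calculation, whose target is pinned down by Macdonald's formula and whose structure follows the $n = 2$ template.
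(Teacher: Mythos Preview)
The paper does not prove this statement: it is quoted as the motivating result of Biswas and D'Mello, and is immediately superseded in the exposition by Franz's theorem (Theorem~\ref{thm:gamma_products}), which handles \emph{all} $n$ at once for arbitrary Maximal spaces. There is therefore no proof in this paper to compare your proposal against.

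That said, your outline follows the strategy of the original Biswas--D'Mello article. The $n\geq 2g-1$ argument is correct in substance: Riemann--Roch makes the Abel--Jacobi map a $\mathbf P^{n-g}$-bundle over the Jacobian, the Jacobian of a Maximal curve is Maximal, and projective bundles preserve Maximality. (Your appeal to Fu's result for the last step is anachronistic relative to Biswas--D'Mello, who computed the Leray--Hirsch isomorphism by hand in this case, but the logic is valid.) Your $n=2$ computation is correct: the identification of the real locus as $N_{g+1}\sqcup\bigsqcup^{\binom{g+1}{2}}T^2$ is right, and $2g^2+3g+3$ matches Macdonald's formula. For $n=3$, however, you have only a plan, not a proof. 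The stratification you describe is the right one, but the gluing of the stratum $X_g(\R)\times(X_g(\C)/C_2)$ to $SP_3(X_g(\R))$ along the locus $\{2p+q:p,q\in X_g(\R)\}$ is genuinely delicate, and the Mayer--Vietoris bookkeeping needed to extract the $\F_2$-Betti sum is the bulk of the work in the original paper; you have not carried it out here.
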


The authors could not provide an answer for the cases $4<n<2g-1$, and so the problem remained open until Franz proved a much more general result.

\begin{thm}[Franz, \cite{franz:gamma_products}]
\label{thm:gamma_products}
Let $X$ be a space and let $\Gamma\subset \mathfrak{S}_n$ be a subgroup.
The $\Gamma$-product of $X$ is the quotient $X^n/\Gamma$.
For each $n$ and each $\Gamma\subset\mathfrak{S}_n$, the $\Gamma$-product of a Maximal space is another Maximal space.
In particular, setting $\Gamma=\mathfrak{S}_n$ and $X=X_g$ a Maximal curve, we see that all its symmetric products are Maximal varieties.
\end{thm}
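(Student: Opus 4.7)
\emph{Plan.} My plan is to leverage the algebraic reformulation of Maximality in Remark~\ref{rem:equivariantly_formal_spaces}: by the Serre spectral sequence of the fibration $X\hookrightarrow X_{C_2}\to BC_2$, surjectivity of $j^{*}$ is equivalent to collapse at $E_2$, hence to $H_{C_2}^{*}(X;\F_2)$ being a free module over $\F_2[z]=H^{*}(BC_2;\F_2)$. The theorem thus reduces to showing that both $X\mapsto X^n$ (diagonal $C_2$-action) and $Y\mapsto Y/\Gamma$ preserve freeness of the $\F_2$-Borel cohomology as an $\F_2[z]$-module.

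\emph{Cartesian products.} The diagonal $C_2$-action on $X\times Y$ fits into the pullback square $(X\times Y)_{C_2}=X_{C_2}\times_{BC_2}Y_{C_2}$, and an Eilenberg--Moore argument gives, when both factors are equivariantly formal, a natural isomorphism
\[
H_{C_2}^{*}(X\times Y;\F_2)\cong H_{C_2}^{*}(X;\F_2)\otimes_{\F_2[z]}H_{C_2}^{*}(Y;\F_2).
\]
Since $\F_2[z]$ is a principal ideal domain, tensor products of free $\F_2[z]$-modules are free, so an induction on $n$ yields that $X^n$ with the diagonal action is Maximal whenever $X$ is.

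\emph{The $\Gamma$-quotient.} Because the $\Gamma$- and $C_2$-actions on $X^n$ commute, the Borel construction descends to $(X^n/\Gamma)_{C_2}=X^n_{C_2}/\Gamma$; the task is to show that $H^{*}(X^n_{C_2}/\Gamma;\F_2)$ is free over $\F_2[z]$. The approach I would try is to stratify $X^n$ by $\Gamma$-orbit type and analyse the Leray spectral sequence of the quotient map, noting that each stratum is a product of configuration spaces in smaller symmetric products of $X$ and is therefore Maximal by an inner induction on $n$.

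\emph{Main obstacle.} The crux is the collapse and splitting of this spectral sequence: because $|\Gamma|$ is typically even, the transfer isomorphism $H^{*}(Y/\Gamma;\F_2)\cong H^{*}(Y;\F_2)^\Gamma$ is unavailable, and one must verify freeness directly. My fallback strategy would be to first prove freeness of $H_{C_2}^{*}(SP_\infty X;\F_2)$ over $\F_2[z]$ by lifting the Nakaoka--Steenrod presentation of $H^{*}(SP_\infty X;\F_2)$ to an equivariant polynomial $\F_2[z]$-algebra on $\tilde H_{C_2}^{*}(X;\F_2)$, and then use cohomological stability of the tower $\{SP_n X\}_n$---that each inclusion $SP_n X\hookrightarrow SP_{n+1} X$ splits on $\F_2$-cohomology---to inherit freeness at each finite stage. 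The passage from $\mathfrak S_n$-quotients to arbitrary $\Gamma\subset\mathfrak S_n$ would then be handled by a further induction, for instance along a normal series of $\Gamma$.
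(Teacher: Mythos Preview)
The paper does not give its own proof of this theorem: it is stated as a result of Franz and cited from \cite{franz:gamma_products}, then used as a black box in the proof of Theorem~\ref{thm:main_theorem}. So there is no proof in the paper to compare against.

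That said, your proposal is not a proof but an outline with an explicitly acknowledged gap. The reduction to freeness of $H_{C_2}^*(-;\F_2)$ over $\F_2[z]$ is correct, and the treatment of Cartesian products is fine. The difficulty you identify---that for the $\Gamma$-quotient the transfer argument fails over $\F_2$ when $|\Gamma|$ is even---is exactly the heart of the matter, and you do not resolve it. Stratifying by orbit type and hoping the Leray spectral sequence of $X^n\to X^n/\Gamma$ collapses and splits over $\F_2[z]$ is a plausible starting point, but neither collapse nor absence of extensions is argued.

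Your fallback for the symmetric case (prove freeness for $SP_\infty X$ first, then descend via cohomological stability and Nakaoka splitting) is close in spirit to the machinery of Sections~\ref{sec:cohomological_properties}--\ref{sec:main_result} of the present paper. Note, however, that the paper's own proof of Theorem~\ref{thm:main_theorem} \emph{invokes} Franz's theorem to know that each $SP_n S^{p,q}$ is Maximal; so if you follow that route you must supply an independent argument that $H_{C_2}^*(SP_\infty S^{p,q};\F_2)$ is free over $\F_2[z]$, and your sentence about ``lifting the Nakaoka--Steenrod presentation to an equivariant polynomial $\F_2[z]$-algebra'' is precisely the step that needs justification. Finally, the reduction from arbitrary $\Gamma\subset\mathfrak S_n$ to the symmetric case ``by induction along a normal series'' is not explained and does not obviously go through: a normal series gives successive cyclic quotients, but passing to a quotient by a $p$-group with $p=2$ is again the hard case.
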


Finally, Baird showed in \cite{baird:symmetric_products_GM-curves} that a similar theorem for the case of symmetric products of GM-curves holds

\begin{thm}[Symmetric products of GM-curves]
All the symmetric products of a Galois-Maximal curve are Galois-Maximal varieties.
\end{thm}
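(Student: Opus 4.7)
The strategy is to split according to Krasnov's dichotomy (Proposition \ref{prop:krasnov}). If $X$ is Maximal, Franz's theorem (Theorem \ref{thm:gamma_products}) directly gives that $SP_n X$ is Maximal, hence \emph{a fortiori} Galois-Maximal. So we may assume that $X$ is Galois-Maximal but not Maximal, so that the induced $C_2$-action on $H^*(X;\F_2)$ is non-trivial. A short computation forces $X$ to have real points: if $X(\R)=\emptyset$ then the left-hand side of \eqref{eq:GM} vanishes, while $H^1(C_2,H^0(X;\F_2))=\F_2$ already contributes to the right-hand side. Fix a real basepoint $x_0 \in X(\R)$; with this choice, the stabilisation sequence
\[
SP_{n-1}X \hookrightarrow SP_n X \twoheadrightarrow SP_n X/SP_{n-1}X \simeq X^{\wedge n}/\mathfrak{S}_n
\]
is $C_2$-equivariant.

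From here I would argue by induction on $n$, the base case being the hypothesis on $X$. The principal tool is the Steenrod--Nakaoka splitting
\[
H^*(SP_n X;\F_2) \cong \bigoplus_{k=0}^{n} \widetilde H^*(X^{\wedge k}/\mathfrak{S}_k;\F_2),
\]
which is $C_2$-equivariant because $x_0$ is fixed. Writing $H^1(X;\F_2)\cong \F_2^{\alpha}\oplus \F_2[C_2]^{\beta}$ with $\alpha+2\beta=2g$, Nakaoka's explicit formulae express each reduced summand above as a functor of the $C_2$-module $H^*(X;\F_2)$, which determines $\dim_{\F_2}H^1(C_2,H^q(SP_n X;\F_2))$ for every $q$. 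In parallel, the fixed locus decomposes by orbit type as
\[
(SP_n X)^{C_2} \;=\; \bigsqcup_{j+2k=n} SP_j(X(\R))\times SP_k\bigl((X\setminus X(\R))/C_2\bigr),
\]
whose cohomology is accessible by Künneth.

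The verification then reduces to matching the two sides of \eqref{eq:GM} at the level of Poincaré (or Macdonald-type) generating series, with the Harnack--Krasnov equality for $X$ providing the identity linking $\alpha$, $\beta$ and the topology of $X(\R)$. The hardest step, and the main obstacle I foresee, is precisely this combinatorial matching: tracking how the $C_2$-module decomposition of $H^1(X;\F_2)$ propagates through the iterated smash and symmetric quotients, and reconciling the resulting contributions with the Künneth expansion of the fixed-point locus, is delicate. A cleaner route, which the body of the paper appears to pursue, is to recast the argument in Borel cohomology, using the characterisation of GM-spaces from \cite{us} together with the cohomological stability and splitting properties announced in the introduction, thereby replacing the explicit combinatorics with a structural homological argument that extends verbatim from curves to arbitrary GM-spaces.
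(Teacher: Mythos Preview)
The theorem you are attempting is stated in the paper as a result of Baird and is not given its own proof there; rather, it is subsumed by the paper's main Theorem~\ref{thm:main_theorem}, whose proof is entirely structural and quite unlike your outline. The paper never touches Macdonald-type generating series, the Steenrod--Nakaoka splitting of $H^*(SP_nX)$, or the orbit-type decomposition of $(SP_nX)^{C_2}$. Instead it proceeds as follows: by the zero-cycles characterisation (Theorem~\ref{thm:main_characterisation}), a finite GM-space $X$ satisfies $\F_2[X]\simeq_{C_2}\F_2[W]$ for an explicit wedge $W$ of representation spheres and induced spheres; since $SP_n$ is a functor of zero-cycles (Lemma~\ref{lem:functor_0-cycles}), one reduces to $SP_nW$. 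For this model one shows directly that $SP_\infty W$ is GM by writing it as a filtered colimit of finite products of $SP_nS^{p,q}$ (Maximal by Franz) and $N^{C_2}SP_nS^r$ (GM by Example~\ref{ex:maximal_spaces}), and invoking cohomological stability (Theorem~\ref{thm:cohomological_properties}(\ref{item:cohomological_stability})). The cohomological splitting property of the sequence $\{SP_nW\}_n$ then pulls the GM property back down to every finite $SP_nW$ (Theorem~\ref{thm:cohomological_properties}(\ref{item:cohomological_splitting})), and the zero-cycles equivalence transfers this to $SP_nX$.

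Your proposal is a genuinely different, computational route, and you are candid that it is incomplete: the ``combinatorial matching'' you flag is not a technicality but the entire content of the argument, and you do not carry it out. The dichotomy via Proposition~\ref{prop:krasnov} and the observation that a GM-curve has real points are correct but do not advance the main difficulty. Your closing paragraph in fact anticipates the paper's strategy rather accurately; what the paper adds beyond ``recast in Borel cohomology'' is the specific mechanism---the zero-cycles model $W$, Lemma~\ref{lem:functor_0-cycles}, and the stability/splitting dichotomy of Theorem~\ref{thm:cohomological_properties}---that makes the structural argument go through uniformly, with no special features of curves used at any point.
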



\section{Characterisation of maximal spaces}
\label{sec:characterisations}

In this section we provide two characterisation theorems for maximal spaces: One in terms of zero-cycles and another one using Borel cohomology.
The latter can be found in the author's paper \cite{us} as a consequence of a more general characterisation in terms of $\RO(C_2)$-graded Bredon cohomology.
It has been included here as a reference for its generalisation to infinite complexes in the next section.

The space zero-cycles associated to a given CW-complex $X$ can be understood as a generalised Eilenberg--Mac Lane space whose homotopy groups compute the reduced homology $X$.
A more precise definition is as follows:

\begin{defi}
\label{def:zero-cycle}
Given a based  space $(X,x_0)$ and an Abelian group $\Gamma$, a \emph{zero-cycle in $X$ with coefficients in $\Gamma$} is an element of the topological Abelian group with coefficients in $\Gamma$ that $X$ generates.
This spaces is defined by McCord in \cite{mccord:classifying_spaces} and denoted by $B(\Gamma,X)$.
Other notations such as $\Gamma\tilde{\otimes} X$ or $\tilde{\Gamma}[X]$ are also common.
We shall write $\Gamma[X]$ to denote the space of zero-cycles in $X$ with coefficients in $\Gamma$.
\end{defi}

The advantage of this procedure is that, in the situation where the space $X$ is a $G$-space, this action is carried out on to $\Gamma[X]$ by linearity, turning it into another $G$-space.
Then, $G$-equivariant homotopy classes of zero-cycles compute the equivariant homology of the space (\cf~\cite{pedro:equivariant_dold-thom_theorem}).
Thus, the homotopy of zero-cycles encodes all the equivariant and non-equivariant homology information of the given space and easing the relation between them.
This is very convenient in this work, because the characterisation below in terms of zero-cycles uses a result based in $\RO(C_2)$-graded Bredon cohomology; nevertheless, the condition of being a maximal space is expressed in terms of singular cohomology.
Zero-cycles are the objects that allow to make this passage from the equivariant to the non-equivariant setting easily, and vice versa.

We will be interested in the case $\Gamma=\F_2$ and $G=C_2$.
The following characterisation of maximal spaces in terms of zero-cycles will be used in the proof of the main theorem Theorem \ref{thm:main_theorem}.

\begin{thm}
\label{thm:main_characterisation}
Let $X$ be a finite $C_2$-CW-complex.
Then:
\ \begin{enumerate}

\item It is a Maximal space if and only if 
\[
\F_2[X] \simeq_{C_2} \bigoplus_{i\in I} \F_2[S^{p_i,q_i}],
\]
where $I$ is a finite set and $p_i\geq q_i\geq 0$ for all $i\in I$.

\item It is a Galois-Maximal space if and only if 
\[
\F_2[X] \simeq_{C_2} \left(\bigoplus_{i\in I} \F_2[S^{p_i,q_i}]\right)\oplus\left(\bigoplus_{j\in J} \F_2[S^{r_j}\wedge(C_2)_+]\right),
\]
where $I$ and $J$ are finite sets and $p_i\geq q_i\geq 0$ for all $i\in I$ and $r_j\geq 0$ for all $j\in J$.

\end{enumerate}
In both cases the equivariant homotopy is also a group homomorphism.
\end{thm}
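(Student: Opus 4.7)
The plan is to reduce this theorem to an $\RO(C_2)$-graded Bredon-cohomology characterisation proved in \cite{us} by passing through the equivariant Dold--Thom theorem. Recall from that reference that $X$ is Maximal (resp.~Galois-Maximal) exactly when the Bredon cohomology $H_{C_2}^{\star}(X;\underline{\F_2})$ splits, as a module over the coefficient ring $\mathbb{H}:=H_{C_2}^{\star}(\pt;\underline{\F_2})$, as a finite direct sum of suspensions of $\mathbb{H}$ (resp.~of suspensions of $\mathbb{H}$ together with induced summands coming from singular $\F_2$-cohomology). The bidegrees appearing in such a splitting are exactly the pairs $(p_i,q_i)$ and the integers $r_j$ in the statement.

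First I would invoke the equivariant Dold--Thom theorem of dos Santos (\cite{pedro:equivariant_dold-thom_theorem}), which identifies $\pi_{\star}^{C_2}\F_2[X]$ with the $\RO(C_2)$-graded Bredon homology $H_{\star}^{C_2}(X;\underline{\F_2})$ and exhibits $\F_2[X]$ as a product of equivariant Eilenberg--Mac Lane spaces. Under this identification, $\F_2[S^{p,q}]$ is the equivariant Eilenberg--Mac Lane space generating a free rank-one summand of $\mathbb{H}$ concentrated in bidegree $(p,q)$, while $\F_2[S^{r}\wedge(C_2)_+]\simeq_{C_2} L^{C_2}\F_2[S^r]$ corresponds to the induced summand in degree $r$. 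Thus the decomposition of homotopy modules on the right-hand side of the statement matches the decomposition of cohomology modules given by the $\RO(C_2)$-graded characterisation.

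Next I would argue that any such algebraic splitting lifts to a $C_2$-equivariant homotopy equivalence: because $\F_2[X]$ is a topological abelian group, it is equivariantly weakly equivalent to the product of equivariant Eilenberg--Mac Lane spaces determined by its equivariant homotopy modules, and one can choose the equivalence so that the projections onto the chosen summands are continuous group homomorphisms. Finiteness of the indexing sets comes from finiteness of $X$ as a $C_2$-CW-complex. This gives the last assertion of the theorem in the same step, since wedge and direct sum of zero-cycle spaces agree up to equivariant homotopy with the Cartesian product used by the group structure.

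The main obstacle, and the reason to lean on \cite{us}, is the bridge between the algebraic hypotheses \eqref{eq:M} and \eqref{eq:GM}, which are stated in terms of singular $\F_2$-cohomology, and the module-theoretic splitting at the $\RO(C_2)$-level. The link is made via the forgetful map of Remark \ref{rem:equivariantly_formal_spaces}, the Borel spectral sequence, and a careful analysis of how the induced involution on singular cohomology controls when only trivial (Maximal) summands appear versus when additional induced (Galois-Maximal) summands are needed. The heavy lifting there is done in \loccit; the present theorem then follows by combining that with equivariant Dold--Thom as above.
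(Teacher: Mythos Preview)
Your approach is essentially the one the paper takes: both routes pass through the $\RO(C_2)$-graded characterisation of \cite{us} and the equivariant Dold--Thom identification of $\F_2[X]$ with an $H\underline{\F}_2$-module, then read off which summands can appear under the \eqref{eq:M}/\eqref{eq:GM} constraints. The one point worth sharpening is your lifting step: the assertion that ``any such algebraic splitting lifts to a $C_2$-equivariant homotopy equivalence'' because $\F_2[X]$ is a topological abelian group is precisely where the paper invokes Clover May's structure theorem (\cite{clover_may:structure_theorem}, Theorem~6.13 and Remark~6.14), which guarantees that every $C_2$-module over $H\underline{\F}_2$ splits at the \emph{spectrum} level into suspensions of $H\underline{\F}_2$ and antipodal spheres; citing that result would make your lifting argument rigorous rather than heuristic.
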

\begin{proof}
The results of Theorem~6.13 and Remark~6.14 in \cite{clover_may:structure_theorem} show that every $C_2$-module over the equivariant Eilenberg--Mac Lane spectrum $H\underline{\F}_2$ must decompose as wedges of copies of $H\underline{\F}_2$ and of antipodal spheres $S_a^n$.
Then, the proof of this result can be adapted to the case of zero-cycles, as these spaces represent the spectrum.
Finally, the \eqref{eq:M} and \eqref{eq:GM} properties impose further restrictions on the splitting of $\F_2[X]$.
We suggest that the reader also consult \cite{us}.
\end{proof}

A characterisation of maximal spaces in terms of \emph{Borel cohomology} was given in \cite{us}.
If $G$ is a topological group and $X$ is a $G$-space, then the homotopy quotient or \emph{Borel construction} of $X$ is the space $X_G=(X\times EG)/G$, where $EG\to BG$ is a representative of the universal bundle of $G$.
This quotient sits in a fibration
\begin{equation}
X\hooklongrightarrow X_G \longtwoheadrightarrow BG,
\label{eq:borel_fibration}
\end{equation}
known as the \emph{Borel fibration}.
The Borel cohomology of $X$ with coefficients in a ring $R$ is the singular cohomology of the total space $X_{G}$:
\[
H_G^*(X;R) \overset{def}{=} H^*(X_{G};R).
\]
In the case of $G=C_2$, we can identify $H_{C_2}^*(pt;\F_2)$ with $\F_2[z]$, where $z\in H_{C_2}^1(pt;\F_2)$ is the generator.
Then, the cohomology of every $C_2$-space becomes a module over $\F_2[z]$.
We identify the quotient $H_{C_2}^*(pt;\F_2)/(z)$ with the field $\F_2$, regarded as an $\F_2[z]$-module.

\begin{thm}
\label{thm:characterisation_borel}
Let $X$ be a finite $C_2$-CW-complex.
Then:
\begin{enumerate}

\item It is an Maximal space if and only if $H_{C_2}^*(X;\F_2)$ is free and finitely generated over $H_{C_2}^*(pt;\F_2)$.
\label{item:free_borel_cohomology}

\item It is a Galois-Maximal space if and only if its Borel cohomology is a finite direct sum of suspensions only of the following two type of modules: the free module $H_{C_2}^*(pt;\F_2)$ and the torsion module $H_{C_2}^*(\pt;\F_2)/(z)$, $z\in H_{C_2}^1(\pt;\F_2)$.


\end{enumerate}
\end{thm}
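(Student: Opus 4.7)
The strategy is to bootstrap both parts of Theorem \ref{thm:characterisation_borel} from the zero-cycle characterisation in Theorem \ref{thm:main_characterisation} by computing Borel cohomology on the two equivariant building blocks that appear there. For a representation sphere $S^{p,q}$ with $p\geq q\geq 0$, the Borel construction $(S^{p,q})_{C_2}$ is the Thom space of the rank-$p$ real bundle $\underline{\R^{p-q}}\oplus \gamma^{\oplus q}$ over $BC_2$, where $\gamma$ is the tautological line bundle; the Thom isomorphism with $\F_2$-coefficients then identifies $\tilde H^*_{C_2}(S^{p,q};\F_2)$ with $\Sigma^p\F_2[z]$, a suspension of the free rank-one module. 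For an induced space $S^r\wedge(C_2)_+$, the Borel construction collapses to $\Sigma^r(EC_2)_+\simeq S^r$ since $EC_2$ is contractible, and the map to $BC_2$ factors through a point, so $\tilde H^*_{C_2}(S^r\wedge(C_2)_+;\F_2)\cong \Sigma^r\F_2[z]/(z)$, a suspension of the torsion module.

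With these two computations in hand, the forward direction of both parts follows by transport. The equivariant equivalences of zero-cycle spaces provided by Theorem \ref{thm:main_characterisation} are simultaneously group homomorphisms, hence equivalences of $H\underline{\F}_2$-module spectra. Applying the Borel cohomology functor to such an equivalence produces a direct-sum decomposition of $H_{C_2}^*(X;\F_2)$ whose summands, by the preceding computations, are of the required form: only free summands in the Maximal case, and free plus torsion in the Galois-Maximal case.

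For the converses one essentially reverses the argument. If $H_{C_2}^*(X;\F_2)$ is a finitely generated free $\F_2[z]$-module, then the Borel spectral sequence $E_2^{p,q}=H^p(C_2;H^q(X;\F_2))\Rightarrow H_{C_2}^{p+q}(X;\F_2)$ must degenerate for rank reasons, so the restriction map $j^*\colon H_{C_2}^*(X;\F_2)\to H^*(X;\F_2)$ is surjective, making $X$ equivariantly formal (Remark \ref{rem:equivariantly_formal_spaces}), hence Maximal. In the Galois-Maximal case, one reads off from the prescribed module decomposition that $H_{C_2}^*(X;\F_2)$ and $H^*(X^{C_2};\F_2)$ have matching total dimensions modulo the contribution of the torsion summands, which corresponds exactly to \eqref{eq:GM} after identifying $H^1(C_2,H^q(X;\F_2))$ with the image of the torsion part under quotienting by $z$.

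The principal obstacle I anticipate is this last lifting step: one must check that the module-theoretic data on $H_{C_2}^*(X;\F_2)$ is \emph{strong enough} to force the zero-cycle splitting of Theorem \ref{thm:main_characterisation}, rather than merely being a numerical consequence of it. The safest route is to avoid re-lifting and instead compare both sides directly with \eqref{eq:M} and \eqref{eq:GM} via the Borel spectral sequence and Proposition \ref{prop:krasnov}, using the free/torsion dichotomy on $\F_2[z]$-modules to separate the contribution of the two types of summands. Careful bookkeeping of the suspension indices $p_i,q_i,r_j$ against the corresponding module generators will also be required to match degrees on both sides.
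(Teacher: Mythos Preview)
The paper does not actually prove Theorem~\ref{thm:characterisation_borel}: it is imported from the author's earlier paper \cite{us}, with the comment that it arises there as a consequence of an $\RO(C_2)$-graded Bredon-cohomology characterisation, and that part~(1) is classical (tom~Dieck, III(4.16)). So there is no in-paper argument to compare against line by line.

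Your route---deducing the Borel statement from the zero-cycle characterisation Theorem~\ref{thm:main_characterisation}---is essentially the same path in disguise, since Theorem~\ref{thm:main_characterisation} \emph{is} the space-level incarnation of the $\RO(C_2)$-graded structure theorem. Your building-block computations for $S^{p,q}$ and $S^r\wedge(C_2)_+$ are correct, and your observation that a zero-cycle equivalence is an equivalence of $H\underline{\F}_2$-modules, hence detected by Borel cohomology, is the right bridge. The forward implications are therefore fine at the level of a sketch. The converse for~(1) via degeneration and Remark~\ref{rem:equivariantly_formal_spaces} is the standard Leray--Hirsch/tom~Dieck argument the paper alludes to.

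The genuine soft spot is the one you flag yourself: the converse for~(2). Knowing only that $H^*_{C_2}(X;\F_2)$ decomposes into suspensions of $\F_2[z]$ and $\F_2[z]/(z)$ does not, on its face, hand you back a zero-cycle splitting, so ``reversing the argument'' is not available. Your fallback---comparing directly with \eqref{eq:GM} via the Borel spectral sequence---is the correct move, but it needs one more ingredient you have not named: the localisation theorem, which identifies the rank of the free part of $H^*_{C_2}(X;\F_2)$ with $\sum_q\dim H^q(X^{C_2};\F_2)$. Once that is in place, the hypothesis says the $z$-torsion is concentrated in order~$1$, which forces $E_2=E_\infty$ (any surviving $d_r$ with $r\geq 2$ would create torsion of order $\geq 2$, as in Section~\ref{sec:generalisation}); degeneration at $E_2$ is exactly Krasnov's criterion for \eqref{eq:GM}. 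That closes the loop without re-lifting to zero-cycles.
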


The result for Maximal spaces was already known (\cf~\cite[III(4.16)]{tom_dieck:transformation_groups}).
In fact, as Maximal spaces can be characterised using the forgetful map $j^*$ (see Remark \ref{rem:equivariantly_formal_spaces}), the Leray--Hirsch theorem (\cf~\cite[Theorem 4D.1]{hatcher:algebraic_topology} or \cite[III(1.14)]{tom_dieck:transformation_groups}) ensures that the Borel cohomology of Maximal spaces is always free, even when the underlying $C_2$-CW-complexes are infinite (but locally finite, see the next section).


\section{Generalisation of maximal spaces}
\label{sec:generalisation}

Maximal spaces, as defined until now, must have finite cohomology, so that \eqref{eq:M} and \eqref{eq:GM} are defined.
This can be surpassed by using a characterisation of maximal spaces in terms of a spectral sequence, first due to Krasnov \cite{krasnov:harnack-thom_inequalities}.

We will consider the group $G=C_2$ and define maximal spaces using the Leray--Serre spectral sequence of the corresponding Borel fibration \eqref{eq:borel_fibration} for a space $X$. 
It thus follows that the only technical requirement on the spaces that one needs to impose is that their homotopy quotient be a Serre fibration over $BC_2$.
Nevertheless, for computational purposes, we shall restrict ourselves to spaces with the equivariant homotopy type of a \emph{locally finite} $G$-CW-complex.
This is a $G$-CW-complex $X$ such that the $n$-skeleton $X^{(n)}$ of the underlying CW-complex is finite for every $n\geq0$.
From now on, and until the end of this section, all our spaces will be assumed to have the equivariant homotopy type of a locally finite $G$-CW-complex.
Notice that a locally finite $G$-CW-complex is countable, and hence the following important result due to Milnor \cite{milnor:construction_universal_bundlesi}.

\begin{prprty}
\label{prprty:products_CW-complexes}
If $X$ and $Y$ are two locally finite $G$-CW-complexes, then the product topology and the weak topology on $X\times Y$ agree, and therefore their product has a natural $G$-CW-complex structure.
\end{prprty}

\begin{rem}
This definition of locally finite CW-complexes differs from the one provided in other texts.
For example, in \cite{geoghegan:topological_methods_group_theory} and \cite{tanaka:product_cw-complexes}, the definition of a locally finite CW-complex is equivalent to the property that the closed covering $\{\bar{e}_\alpha\}_\alpha$ consisting of the \emph{closure} of the all the cells that define the CW-complex is locally finite. 
(Our convention is that of Hatcher \cite{hatcher:algebraic_topology}: a cell is the homeomorphic image of the \emph{interior} of the disc under the attaching map; this differs from \cite{geoghegan:topological_methods_group_theory}.)
An example of a CW-complex which is locally finite in this sense but not in ours is the real line $\R$.
Conversely, the infinite wedge $W=\bigvee_{n\geq 1} S^n$ satisfies our criteria of local finiteness but the base-point $\ast\in W^{(0)}$ meets all the closed cells $\bar{e}_n=S^n$.
\end{rem}

We set the following notation:
\begin{equation}
\label{notationLS}
\parbox{\dimexpr\linewidth-4em}{%
\strut%
\itshape%
For a $C_2$-space $X$, let $\{E_*(X),d_*\}$ denote the Leray--Serre spectral of the Borel fibration $X_{C_2}\to BC_2$ with coefficients in $\F_2$.%
%
\strut
}
\tag{L--S}
\end{equation} 

\begin{defi}
\label{def:maximal_spaces_bis}
Let $X$ be a $C_2$-space with the equivariant homotopy type of a locally finite $C_2$-CW-complex.
We say that $X$ is an \emph{infinite Galois-Maximal space} if it satisfies the following two conditions:
\begin{enumerate}
   
\item There is at least one fixed point.
   
\item The Leray-Serre spectral sequence $\{E_*(X),d_*\}$ degenerates at the second page.
   
\end{enumerate}
   
We say that $X$ is an \emph{infinite Maximal space} if it satisifes the previous two conditions and, in addition, $C_2$ acts trivially on its singular cohomology $H^*(X;\F_2)$.
\end{defi}

The use of a similar\footnote{To be precise, the spectral sequence of \cite{krasnov:harnack-thom_inequalities} is used to characterise GM-varieties.
Therefore, the equivariant cohomology theory considered there is with coefficients in a $G$-sheaf, and the appropriate spectral sequence is the second Grothendieck spectral sequence (\cf~\cite[(5.2.5)]{grothendieck:tohoku_paper}).
Nevertheless, for a finite group $G$, that is equivalent to the Leray--Serre spectral sequence that arises from the Borel construction (\cf~\cite{stieglitz:equivariant_sheaf_cohomology}).} spectral sequence to characterise maximal spaces in the sense of Definition \ref{def:maximal_spaces} is considered in \cite{krasnov:harnack-thom_inequalities} already.
We have used that idea to provide a more general definition.
Of course, whenever \eqref{eq:M} and \eqref{eq:GM} are defined, both definitions are equivalent (\cf~\cite[III(1.18) and III(4.16)]{tom_dieck:transformation_groups}, \cite[Proposition 2.3]{krasnov:harnack-thom_inequalities}), so it is not necessary to make a distintion between the maximal spaces introduced here and the ones of Definition \ref{def:maximal_spaces}. 
Nevertheless, we will still use the terms \emph{finite} and \emph{infinite} sometimes to specify whether the underlying space has the homotopy type of a finite or locally finite CW-complex.

\begin{rem}
One must be careful when considering maximal spaces, as many of the results of Section~\ref{sec:maximal_spaces} might not longer be true.
For example, and as far as I know, it is unknown whether the Cartesian product of two infinite Galois-Maximal spaces is another Galois-Maximal space.
This is because the Leray--Serre spectral sequence $E_*(X\times Y)$ for the product is an Eilenberg-Moore spectral sequence (\cf~\cite[Chapters 7 and 8]{mccleary:spectral_sequences}) over a non-simply connected space, and no\footnote{%
There have been several attempts to generalise the conditions under which the Eilenberg--Moore spectral sequence converges. We highlight Larry Smith \cite{larry_smith:kunneth_theorem}, Hodgkin \cite{hodgkin:equivariant_kunneth_theorem_K-theory} and Greenless \cite{greenless:generalized_eilenberg-moore}.
In fact, with the work of Larry Smith, and using the generalisation of Theorem \ref{thm:characterisation_borel}, it is possible to show that the Cartesian product of an infinite Maximal space with an infinite Galois-Maximal space is Galois-Maximal, and that the product of two infinite Maximal spaces is again Maximal.
}
results for the convergence are known.
\end{rem}

To conclude this section, we would like to extend the characterisation of Theorem \ref{thm:characterisation_borel} so that it includes infinite maximal spaces as in Definition \ref{def:maximal_spaces_bis}.
We start by noting that the Leray--Serre spectral sequence associated to the Borel fibration \eqref{eq:borel_fibration} computes the Borel cohomology of the space.
In the case of maximal spaces, this spectral sequence degenerates at the second page and, as we are considering $\F_2$ coefficients, there is no extension problem.
Thus
\begin{equation}
H_{C_2}^n(X;\F_2) \cong \bigoplus_{p+q= n} E_2^{p,q} = \bigoplus_{p+q= n} H^p(C_2,H^q(X;\F_2))
\label{eq:decomposition_borel}
\end{equation}
as Abelian groups.
This is in fact an isomorphism of $\F_2[z]$-modules, as the Leray--Serre spectral sequence has a module structure over $H_{C_2}^*(pt;\F_2)=\F_2[z]$ inherited by the $E_2$-term.
Actually, all the $E_r$-terms contain a copy of $\F_2[z]$ that comes from the fixed points, that appears first in $E_2^{*,0}$ and that survive through all the stages (infinite cocycles).
Moreover, the differentials are module homomorphisms.
These facts, all together, lead to the following property.

\begin{prprty}
\label{prprty:spectral_sequence}
Let $a\in E_r^{p,q}$ be non-trivial.
Then, there exists $a_0\in E_r^{0,q}$ such that $a=z^p a_0$, and it follows that $d_ra=0$ if and only if $d_ra_0=0$.
\end{prprty}

When proving the more general version of Theorem \ref{thm:characterisation_borel}, the crucial observation is that the page at which the spectral sequence degenerates determines the types of modules that can appear in the decomposition \eqref{eq:decomposition_borel}.
They can be worked out by looking at the \emph{torsion order} of the Borel cohomology.
By this, we mean the following: Let $M$ be a graded module over $H_{C_2}^*(pt;\F_2)$ and let $m\in M$ be a homogeneous torsion element (non-trivial); the \emph{torsion order} of $m$ is the integer $r$ such that $z^rm =0$ but $z^{r-1}m\neq 0$.
We will refer to $m$ as a torsion element of \emph{order $r$}.
The module $\F_2[z]/(z^r)$ contains elements of torsion orders $1,2,\dots, r$.

\begin{prop}
Let $X$ be a locally finite $C_2$-CW-complex.
If the Leray--Serre spectral sequence for the Borel fibration of $X$ does not degenerate at the second page, then the Borel cohomology of $X$ must have torsion elements of order strictly bigger than one.
\end{prop}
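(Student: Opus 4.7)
The plan is to produce, in two stages, a torsion element of order at least two in $H^{*}_{C_{2}}(X;\F_{2})$: first to locate such a torsion class on the $E_{\infty}$-page, and then to transfer it to Borel cohomology while correcting for possible extension effects.

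For the first stage, I would exploit the failure of degeneration. Property \ref{prprty:spectral_sequence} lets me reduce any non-zero differential to one on the $p = 0$ column, so I pick the smallest $q^{*} \geq 0$ such that some $d_{s}$ (with $s \geq 2$) is non-zero on an element of $E_{s}^{0, q^{*}}$, and let $s^{*} \geq 2$ be the smallest such index at $q = q^{*}$. For an $a \in E_{s^{*}}^{0, q^{*}}$ with $d_{s^{*}} a \neq 0$, Property \ref{prprty:spectral_sequence} yields $d_{s^{*}} a = z^{s^{*}} b$ with $b \in E_{s^{*}}^{0, q^{*} - s^{*} + 1}$ non-zero. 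The crucial point is that $q^{*} - s^{*} + 1 < q^{*}$: by minimality of $q^{*}$, every differential vanishes on every bidegree $(p', q')$ with $q' < q^{*}$ (spreading from $p' = 0$ to arbitrary $p'$ via Property \ref{prprty:spectral_sequence}). Therefore $b$ is a permanent cycle that is not killed as a boundary (the sources have negative $p$-degree), and the same holds for $z^{s^{*} - 1} b$ (its potential sources either have negative $p$-degree or lie at $q' < q^{*}$). Since $z^{s^{*}} b = d_{s^{*}} a$ vanishes in $E_{s^{*} + 1}$, the element $b$ represents a torsion class of order exactly $s^{*} \geq 2$ in $E_{\infty}^{0, q^{*} - s^{*} + 1}$.

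For the second stage, I would lift $b$ to $\tilde{b} \in H^{q^{*} - s^{*} + 1}_{C_{2}}(X;\F_{2})$. The non-vanishing of $z^{s^{*} - 1}\tilde{b}$ is automatic, since its class in $E_{\infty}^{s^{*} - 1, q^{*} - s^{*} + 1}$ is $z^{s^{*} - 1} b \neq 0$; however, $z^{s^{*}}\tilde{b}$ lies a priori only in $F^{s^{*} + 1} H^{q^{*} + 1}_{C_{2}}(X;\F_{2})$ and may be non-zero. Whenever $z^{s^{*}}\tilde{b}$ has a non-zero class in some $E_{\infty}^{p, q^{*} + 1 - p}$ with $p \geq s^{*} + 1$, Property \ref{prprty:spectral_sequence} expresses this class as $z^{p} c$ for some $c \in E_{\infty}^{0, q^{*} + 1 - p}$; lifting $c$ to $\tilde{c}$ and replacing $\tilde{b}$ by $\tilde{b} - z^{p - s^{*}} \tilde{c}$ strictly raises the filtration of $z^{s^{*}}\tilde{b}$ without affecting the non-zero class of $z^{s^{*} - 1}\tilde{b}$ (the correction term sits at filtration $p - 1 \geq s^{*}$). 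Since $X$ is locally finite, $H^{q^{*} + 1}_{C_{2}}(X;\F_{2})$ is finite-dimensional with a filtration of finite length, so the iteration terminates in finitely many steps with $z^{s^{*}}\tilde{b} = 0$, exhibiting $\tilde{b}$ as a torsion element of order $s^{*} \geq 2$.

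The main obstacle is the extension problem tackled in the second stage: a torsion class in $E_{\infty}$ need not lift to torsion in Borel cohomology, since extensions can connect it to free summands in strictly higher filtrations even when the associated graded has only order-$1$ torsion. The iterative correction overcomes this by using Property \ref{prprty:spectral_sequence} in $E_{\infty}$ to identify the obstructions, together with the finite length of the filtration (guaranteed by local finiteness) to ensure termination.
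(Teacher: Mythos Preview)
Your argument is correct and differs from the paper's in two respects.

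First, in locating the $E_\infty$-torsion: the paper fixes the smallest page $r_0$ carrying a non-zero differential and then the smallest row $q_0$ at that page; since the resulting target $a_1$ need not be a permanent cycle, the paper iterates downward in $q$ (producing $a_2, a_3,\dots$) until a permanent cycle is reached by dimensional exhaustion. You instead minimise over the row $q^*$ first, which forces every differential out of rows $q'<q^*$ to vanish on all pages, so your $b$ is a permanent cycle immediately and no descent iteration is needed. This is a cleaner bookkeeping choice that buys you a one-step Stage~1.

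Second, and more substantively, the paper stops once it has exhibited the torsion class on a page of the spectral sequence and declares the proof finished, without addressing the passage to Borel cohomology itself. Your Stage~2 confronts the extension problem head-on: a lift $\tilde b$ of an $E_\infty$-torsion class need not be torsion in $H^*_{C_2}(X;\F_2)$, and your iterative correction --- using Property~\ref{prprty:spectral_sequence} on $E_\infty$ to peel off the higher-filtration obstruction $z^p c$ at each step, together with the finite length of the filtration on $H^{q^*+1}_{C_2}$ guaranteed by local finiteness --- is precisely what is required to close that gap. In this sense your proof is more complete than the paper's own.
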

\begin{proof}
Let $r_0>2$ be the least integer such that $d_{r_0}\not\equiv 0$.
As Property \ref{prprty:spectral_sequence} shows, it is enough to consider the restriction of $d_{r_0}$ to the zeroth column $E_{r_0}^{0,*}$.
Let
\[
q_0 = \min \left \{q \;\middle |\; d_{r_0}:E_{r_0}^{0,q}\to E_{r_0}^{r_0,q-r_0+1} \mbox{ is non-trivial}\right \}.
\]
Due to dimensional reasons, $q_0\geq r_0-1$ and, in fact, $q_0>r_0-1$, for $q_0=r_0-1$ contradicts the fact that $E_2^{*,0}=E_\infty^{*,0}$.
Regardless of its value, there will be an element $a_1\in E_{r_0}^{0,q_0-r+1}$ such that $z^{r_0}a_1=d_{r_0}a_0\equiv 0$ in $E_{r_0+1}$.
If $a_1$ is an infinite cocycle, then we are done.
Otherwise there exists $r_1>r_0$ such that $d_{r_1}a_1\neq0 $.
Let $a_2\in E_{r_1}^{0,q_0-r_0+1}$ be such that $d_{r_1}a_1 = z^{r_1}a_2$.
Once again, either it is an infinite cocycle and the proof is completed, or it is not and there exists $r_2>r_1$ such that $d_{r_2}a_2\neq 0$.

As $q_0$ is finite, this must stop eventually: after $n$ steps we will get an element
\[
a_n\in E_{r_n}^{0,q_0+n-r_0-\dots-r_{n-1}}
\]
with $d_{r_{n}}a_{n} =0$ because of dimensional reasons.
That element will have torsion order $r_n>1$.

The proof becomes clearer with the figure below.
\end{proof}

\begin{figure}[H]
     \centering
\begin{pspicture}(-6,-4.5)(6,4.1)
\psset{unit = 0.7}
\psline[linecolor=gray]{-}(-5.5,-5)(-2.5,-5)
\psline[linecolor=gray]{->}(-1.5,-5)(5,-5)
\psline[linecolor=gray]{-}(-5,-5.5)(-5,0.5)
\psline[linecolor=gray]{->}(-5,1.5)(-5,5)
\rput[b]{0}(-2,-5){$\dots$}
\rput[B]{0}(-5,0.75){$\vdots$}
\rput[Bl]{0}(5.1,-5){$\textcolor{gray}{p}$}
\rput[b]{0}(-5,5.1){$\textcolor{gray}{q}$}

\rput[b]{0}(5,4.5){$E_*^{p,q}$}

\psline{->}(-5,3.75)(-3.5,3.25)
\pscircle[fillstyle=solid,fillcolor=black,dimen=inner](-5,3.75){0.03}
\rput[Br]{0}(-5.25,3.75){$a_0$}
\psline[linestyle=dashed,dash=3pt 2pt]{-}(-3.5,-5.1)(-3.5,3.25)

\psline{->}(-5,3.25)(-2.75,2.5)
\pscircle[fillstyle=solid,fillcolor=black,dimen=inner](-5,3.25){0.03}
\rput[br]{0}(-5.25,3.25){$a_1$}
\psline[linestyle=dotted,dotsep=2pt]{|->>}(-5.014,3.25)(-3.5,3.25)
\rput[bl]{0}(-3.4,3.25){$d_{r_0}a_0=z^{r_0}a_1$}
\psline[linestyle=dashed,dash=3pt 2pt]{-}(-2.75,-5.1)(-2.75,2.5)
\rput[t]{0}(-3.5,-5.25){$r_0$}

\pscircle[fillstyle=solid,fillcolor=black,dimen=inner](-5,2.5){0.03}
\rput[Br]{0}(-5.25,2.5){$a_2$}
\psline[linestyle=dotted,dotsep=2pt]{|->>}(-5.014,2.5)(-2.75,2.5)
\rput[Bl]{0}(-2.65,2.5){$d_{r_1}a_1=z^{r_1}a_2$}
\rput[t]{0}(-2.75,-5.25){$r_1$}

\psline{->}(-5,-0.5)(-1.25,-1.75)
\pscircle[fillstyle=solid,fillcolor=black,dimen=inner](-5,-0.5){0.03}
\rput[Br]{0}(-5.25,-0.5){$a_{n-2}$}
\psline[linestyle=dashed,dash=3pt 2pt]{-}(-1.25,-5.1)(-1.25,-1.75)
\rput[t]{0}(-1.25,-5.25){$r_{n-2}$}

\psline{->}(-5,-1.75)(1,-3.75)
\pscircle[fillstyle=solid,fillcolor=black,dimen=inner](-5,-1.75){0.03}
\rput[Br]{0}(-5.25,-1.75){$a_{n-1}$}
\psline[linestyle=dotted,dotsep=2pt]{|->>}(-5.014,-1.75)(-1.25,-1.75)
\rput[Bl]{0}(-1.15,-1.75){$d_{r_{n-2}}a_{n-2}=z^{r_{n-2}}a_{n-1}$}
\psline[linestyle=dashed,dash=3pt 2pt]{-}(1,-5.1)(1,-3.75)
\rput[t]{0}(1,-5.25){$r_{n-1}$}

\psline{->}(-5,-3.75)(2.5,-6.25)
\pscircle[fillstyle=solid,fillcolor=black,dimen=inner](-5,-3.75){0.03}
\rput[Br]{0}(-5.25,-3.75){$a_n$}
\psline[linestyle=dotted,dotsep=2pt]{|->>}(-5.014,-3.75)(1,-3.75)
\rput[Bl]{0}(1.1,-3.75){$d_{r_{n-1}}a_{n-1}=z^{r_{n-1}}a_n$}
\rput[Bl]{0}(2.6,-6.25){$d_{r_{n}}a_{n}$}


\end{pspicture}
         \caption{Illustration of the torsion phenomenon due to the existence of $r>2$ such that $d_r$ is non-trivial.}
         \label{fig:M_2shifted}
\end{figure}
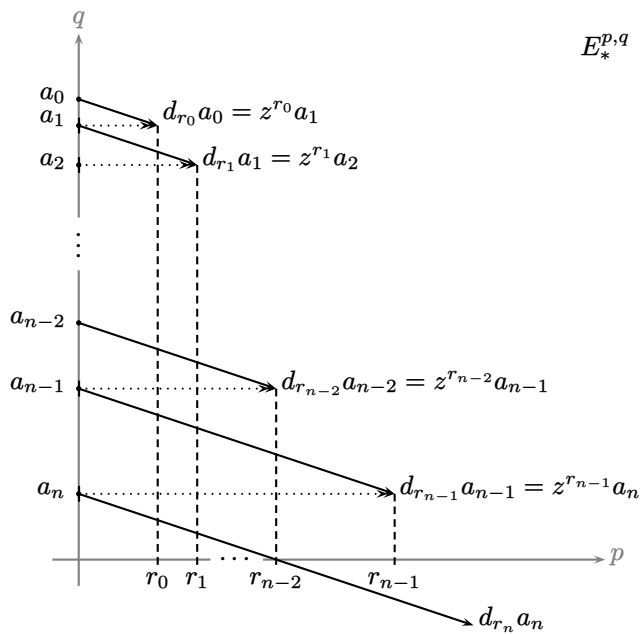


\section{Cohomological properties}
\label{sec:cohomological_properties}

In this section we give a systematic study of the two cohomological properties satisfied by the symmetric products and which play an important role in the proof of Theorem \ref{thm:main_theorem}.
One of these properties is known as \emph{cohomological stability}; it is well known in group theory and some results about it for topological spaces can be found in the paper of Jiménez and Wilson \cite{jimenez&wilson:homological_stability}.
The results of Steenrod \cite{steenrod:symmetric_products} about the cohomology of the symmetric products constitute the archetypal example.
The other cohomological feature, first suggested by the work of Nakaoka in \cite{nakaoka:cohomology_symmetric_products} also for the cohomology symmetric products, has been tentatively called \emph{cohomological splitting}.

We need some preliminary results on the inverse limits of groups.
The reader is refer to Section 3.5 in Weibel \cite{weibel:homological_algebra}, Chapter 7 in Switzer \cite{switzer:algebraic_topology} and additional topic 3F in Hatcher \cite{hatcher:algebraic_topology} for further details.

A \emph{tower of groups} \cite[p.~80]{weibel:homological_algebra} is a family of groups and homomorphisms of the form
\[
\cdots \longrightarrow A_{n+1}\longrightarrow A_n \longrightarrow \cdots \longrightarrow A_2 \longrightarrow A_1 \longrightarrow A_0 .
\]
We will denote it by $\{A_n\}$.

\begin{defi}
A tower $\{A_n\}$ of Abelian groups satisfies the \emph{Mittag-Leffler condition} (\cf~\cite[Definition 3.5.6]{weibel:homological_algebra}) if for each $m$, there exists some $n_0 >m$ such that the image of $A_n\to A_m$ equals the image of $A_{n_0}\to A_m$ for all $n > n_0$.
 (The images of the different $A_n$ in $A_m$ satisfy the descending chain condition.)
\end{defi}

The Mittag-Leffler condition is satisfied if, for example, all the maps $A_{n+1}\to A_n$ in the tower $\{A_n\}$ are onto.

\begin{prop}[See {\cite[Proposition 3.5.7]{weibel:homological_algebra}}]
We denote by $\varprojlim^1$ the first right-derived functor of $\varprojlim$.
If a tower $\{A_n\}$ of Abelian groups satisfies the Mittag-Leffler condition, then
\[
\varprojlim^1 A_n = 0.
\]
\end{prop}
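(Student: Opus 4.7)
The plan is to use the standard short exact sequence description of $\varprojlim^1$ and reduce the problem to a tower with surjective transition maps. Concretely, writing $f_{n+1}:A_{n+1}\to A_n$ for the bonding maps, recall that $\varprojlim A_n$ and $\varprojlim^1 A_n$ fit in an exact sequence
\[
0 \longrightarrow \varprojlim_n A_n \longrightarrow \prod_n A_n \overset{\Delta}{\longrightarrow} \prod_n A_n \longrightarrow \varprojlim_n{}^1 A_n \longrightarrow 0,
\]
where $\Delta\bigl((a_n)_n\bigr)=\bigl(a_n-f_{n+1}(a_{n+1})\bigr)_n$. Thus the statement reduces to showing that $\Delta$ is surjective under the Mittag-Leffler hypothesis.

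The first step would be to observe that the easy case is that of a tower with all transition maps surjective: given $(b_n)\in \prod_n A_n$, one can solve $a_n-f_{n+1}(a_{n+1})=b_n$ recursively by choosing $a_0$ arbitrarily and, at each stage, using surjectivity of $f_{n+1}$ to lift $a_n-b_n\in A_n$ to an element $a_{n+1}\in A_{n+1}$. This shows $\Delta$ is surjective and hence $\varprojlim^1A_n=0$ for such towers.

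The second, and main, step is to reduce the general Mittag-Leffler case to this surjective case. For each $m$ define the \emph{stable image}
\[
B_m \;=\; \bigcap_{n\geqq m}\operatorname{im}\!\bigl(A_n\to A_m\bigr).
\]
By the Mittag-Leffler hypothesis this intersection stabilises, \ie, there exists $n_0>m$ with $B_m=\operatorname{im}(A_n\to A_m)$ for every $n\geqq n_0$. The induced bonding maps $B_{m+1}\to B_m$ are then surjective, because every element of $B_m$ lifts to $A_n$ for $n$ large, and its image in $A_{m+1}$ then lies in $B_{m+1}$ by construction.

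The last step is to argue that the inclusions $B_m\hookrightarrow A_m$ induce isomorphisms on both $\varprojlim$ and $\varprojlim^1$. For $\varprojlim$ this is immediate, since any coherent sequence $(a_n)$ already lies componentwise in the stable images. For $\varprojlim^1$ one uses the long exact sequence of the short exact sequence of towers
\[
0\longrightarrow \{B_n\}\longrightarrow \{A_n\}\longrightarrow \{A_n/B_n\}\longrightarrow 0
\]
together with the fact that the quotient tower $\{A_n/B_n\}$ has $\varprojlim=\varprojlim^1=0$: indeed, for each $m$ and each $n$ sufficiently large the map $A_n/B_n\to A_m/B_m$ is zero, so the quotient tower is pro-zero and thus both of its derived limits vanish. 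Combining these reductions with the first step yields $\varprojlim^1A_n\cong\varprojlim^1B_n=0$, which is what we want.

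The main obstacle is the last verification, namely that passing to the stable images does not alter $\varprojlim^1$. Everything else is formal: the surjective case is a direct diagonal construction and the Mittag-Leffler condition was designed precisely to make the stable subtower $\{B_n\}$ behave well. Once the pro-zero property of $\{A_n/B_n\}$ is in place, the conclusion follows cleanly from the six-term exact sequence associated to the short exact sequence of towers.
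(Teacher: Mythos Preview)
Your argument is correct and is essentially the proof given in Weibel's textbook, which is exactly what the paper cites; the paper itself provides no proof of this proposition, only the reference \cite[Proposition 3.5.7]{weibel:homological_algebra}. The three ingredients you use---the explicit description of $\varprojlim^1$ as the cokernel of $\Delta$, the surjective case, and the reduction via the stable-image subtower together with the pro-zero quotient---match Weibel's presentation line for line.
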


\begin{defi}
Let 
\begin{equation}
X_0 \overset{f_0}{\longrightarrow} X_1 \overset{f_1}{\longrightarrow} X_2 \longrightarrow \cdots \longrightarrow X_n \overset{f_n}{\longrightarrow} X_{n+1} {\longrightarrow} \cdots
\label{eq:sequence}
\end{equation}
be a sequence of spaces and maps in some appropriate category $\mathscr T$.
We say that the sequence \eqref{eq:sequence} exhibits:
\begin{enumerate}

\item \emph{Cohomological stability} with respect to the ring $R$ if, for each $q>0$, there exists $n_0\in\N$ (depending on $q$) such that the maps 
\[
f_n^*:H^q(X_{n+1};R) \to H^q(X_n;R)
\]
are all isomorphisms for $n>n_0$.

\item The \emph{cohomological splitting property} with respect to the ring $R$ if each morphism
\[
f_n^*:H^*(X_{n+1};R) \to H^*(X_n;R)
\]
admits a section.

\end{enumerate}
In the case when $\mathscr T$ is a $G$-category, the sections of the cohomological splitting property must be equivariant with respect to the induced action of $G$ on cohomology. 
\end{defi}

\begin{prprty}
\label{prpty:products_cohomological_properties}
Products, wedge sums and additive and multiplicative inductions of sequences (performed term-wise) preserve cohomological stability and the cohomological splitting property (some of these results might require to consider coefficients in a field).
\end{prprty}

From now on we shall restrict ourselves to sequences whose spaces are finite pointed $C_2$-CW-complexes, and whose morphisms are equivariant, base-preserving, cellular maps, although some of the results might hold in a more general setting.
We will further assume that each map $f_n:X_n\to X_{n+1}$ is a closed embedding, and that the quotient
\[
X_n/X_{n-1} = \bigvee_i S^{\alpha_i}
\]
satisfies $\alpha_i>\dim X_{n-1}$ for all $i$ (this is to ensure that the union is a locally finite $C_2$-CW-complex).
Finally, we will consider cohomological properties with respect to the field $\F_2$ and write $H^*(X)$ instead of $H^*(X;\F_2)$.

\begin{lem}
\label{lem:cohomological_properties}
Let 
\[
X_0 \overset{f_0}{\longrightarrow} X_1 \overset{f_1}{\longrightarrow} X_2 \longrightarrow \cdots \longrightarrow X_n \overset{f_n}{\longrightarrow} X_{n+1} {\longrightarrow} \cdots
\]
be a sequence as above and let
\[
X=\bigcup_n X_n.
\]
Denote by $j_n:X_n\\hookrightarrow X$ the natural inclusion of $X_n$ into $X$.
\begin{enumerate}

\item If the sequence exhibits cohomological stability, then:
\begin{enumerate}

\item For each $q>0$, there exists $n_0$ such that $j_n^*:H^q(X) \to H^q(X_n)$ is an isomorphism for all $n>n_0$.
In particular the inclusions $j_n$ induce an isomorphism
\[
H^*(X) \cong \varprojlim_n H^*(X_n).
\]

\item For each $q>0$, there exists a space $X_n$ such that $j_n^*:H^r(X)\to H^r(X_n)$ is an isomorphism for all $r\leq q$.
\label{item:cohomological_stability_b}
\end{enumerate}

\item If, instead, the sequence possesses the cohomological splitting property, then:
\begin{enumerate}

\item There is an isomorphism
\[
H^*(X) \cong \varprojlim_n H^*(X_n)
\]
\label{item:1}
induced by the inclusions $j_n:X_n\to X$.

\item For each $n$, the morphism $j_n^*:H^*(X)\to H^*(X_n)$ admits a section, which is equivariant.
\label{item:section}

\end{enumerate}
 
\end{enumerate}
\end{lem}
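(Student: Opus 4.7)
The plan is to reduce both statements to Milnor's $\varprojlim^1$ short exact sequence
\[
0 \to {\varprojlim}^1_n H^{q-1}(X_n) \to H^q(X) \to \varprojlim_n H^q(X_n) \to 0,
\]
which is applicable in our setting because the hypotheses on $X_n/X_{n-1}$ make $X = \bigcup_n X_n$ a locally finite CW-complex filtered by the subcomplexes $X_n$, and the inclusions $j_n$ are cofibrations. All cohomology is taken with $\F_2$ coefficients, and the edge map in the Milnor sequence coincides with $j_n^*$ by naturality.

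For part (1)(a), cohomological stability says the tower $\{H^{q-1}(X_n)\}_n$ is eventually a sequence of isomorphisms for each fixed $q$, so it trivially satisfies the Mittag--Leffler condition and has vanishing $\varprojlim^1$. The term $\varprojlim_n H^q(X_n)$ is then just the stable value, giving the claimed iso $j_n^*\colon H^q(X) \xrightarrow{\cong} H^q(X_n)$ for $n$ large. Part (1)(b) follows by choosing a common stabilisation threshold across the finitely many degrees $r \leq q$.

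For part (2)(a), the existence of a section of each $f_n^*$ forces $f_n^*$ to be surjective, so the tower has surjective structure maps and again satisfies Mittag--Leffler; ${\varprojlim}^1 = 0$ and the Milnor sequence collapses to the desired isomorphism $H^*(X) \cong \varprojlim_n H^*(X_n)$. For (2)(b), fix $n$ and choose, for every $k \geq n$, an equivariant section $t_k\colon H^*(X_k)\to H^*(X_{k+1})$ of $f_k^*$. Given $\alpha \in H^*(X_n)$, define a compatible family by
\[
s(\alpha)_m = \begin{cases} (f_m^* \circ \cdots \circ f_{n-1}^*)(\alpha) & \text{if } m \leq n, \\ (t_{m-1} \circ \cdots \circ t_n)(\alpha) & \text{if } m > n, \end{cases}
\]
with the convention that an empty composition is the identity. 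A direct check using $f_k^* \circ t_k = \mathrm{id}$ shows $f_m^*(s(\alpha)_{m+1}) = s(\alpha)_m$ for every $m$, so $s(\alpha) \in \varprojlim_m H^*(X_m)$. Composing with the iso of (2)(a) yields a $C_2$-equivariant section of $j_n^*$.

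The substantive point is the applicability of Milnor's sequence in the filtration we have; once that is in place, both halves are formal consequences of the Mittag--Leffler argument together with the explicit telescoping construction of a compatible family. I do not expect a genuine obstacle: the only care required is that the sections $t_k$ can simultaneously be chosen equivariant, which is immediate from the definition of the cohomological splitting property in a $C_2$-category, so that the resulting section $s$ is itself equivariant.
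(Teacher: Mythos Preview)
Your proposal is correct and follows essentially the same route as the paper: both arguments invoke the Milnor exact sequence, verify the Mittag--Leffler condition (from eventual isomorphisms in the stability case, from surjectivity in the splitting case) to kill $\varprojlim^1$, and then construct the section in (2)(b) by the same explicit telescoping formula using the $f_m^*$ going down and the sections $t_k$ going up. The paper's treatment is virtually identical, including the observation that the isomorphism with the inverse limit is induced by the inclusions $j_n$ and that equivariance of the section follows from equivariance of the $t_k$.
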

\begin{proof}
We first explain how the isomorphism $H^*(X) \cong \varprojlim_n H^*(X_n)$ is obtained.
The Milnor exact sequence (\cf~\cite[Theorem 3F.8]{hatcher:algebraic_topology} or \cite[7.66]{switzer:algebraic_topology}) yields
\begin{equation}
0 \longrightarrow \varprojlim_n^1 H^{q-1}(X_n) \longrightarrow H^q(X) \longrightarrow \varprojlim_n H^q(X_n) \longrightarrow 0 .
\label{eq:milnor_exact_sequence}
\end{equation}
Moreover, either of the cohomological properties implies that the Mittag-Leffler condition is satisfied; therefore
\[
\varprojlim_n^1 H^{q-1}(X_n) \equiv 0,
\]
%
as desired.
It is a fact (\cf~\cite[7.66 Proposition]{switzer:algebraic_topology}) that this isomorphism is induced by the inclusions $j_n:X_n\to X$.
Finally, as these maps is equivariant, the above is an isomorphism of $C_2$-groups.

In the case of cohomological stability, the inverse limit stabilises for each $q$, meaning that
\[
\varprojlim_n H^q(X_n) \cong H^q(X_{n_0})
\]
for some $n_0\gg0$.
Now statement (\ref{item:cohomological_stability_b}) follows from this by considering the space $X_n$, where
\[
n = \max_{0\leq r\leq q} \left\{m \;\middle|\; j_n^*:H^r(X)\overset{\cong}{\to} H^r(X_m) \right\}.
\]

Regarding the section $s_n:H^*(X_n)\to H^*(X)$ of statement \ref{item:section}, it is defined as follows: For each $n\geq 0$, let $t_n$ denote the section of $f_n^*$; then the image of an element $a\in H^*(X_n)$ under $s_n$ is the element
\[
(a_m)_m = \begin{cases}
f_{n,m}^*(a), & \mbox{if } m\leq n, \\
a, & \mbox{if } m=n, \\
t_{n,m}(a), & \mbox{if } m>n.
\end{cases}
\]
Here $f_{n,m}^*$ denotes the composite morphism $f_n^*f_{n-1}^*\cdots f_m^*$.
Similarly, $t_{n,m} = t_m t_{m+1} \cdots t_n$.
It is a straightforward exercise to verify that the image of each $s_n$ is indeed contained in the inverse limit of the $H^*(X_n)$.
\end{proof}


The main result of this section is the following.

\begin{thm}
\label{thm:cohomological_properties}
Let
\[
X_0 \overset{f_0}{\longrightarrow} X_1 \overset{f_1}{\longrightarrow} X_2 \longrightarrow \cdots \longrightarrow X_n \overset{f_n}{\longrightarrow} X_{n+1} {\longrightarrow} \cdots
\]
be as above and let
\[
X=\bigcup_n X_n.
\]
\begin{enumerate}

\item If the sequence has cohomological stability and each $X_n$ is maximal, then $X$ is also a maximal space.
\label{item:cohomological_stability}

\item Conversely, assume that $X_n^{C_2}$ is non-empty for every $n$.
Then, if the sequence possesses the cohomological splitting property and $X$ is maximal, all the $X_n$ are also maximal spaces.
\label{item:cohomological_splitting}

\end{enumerate}
\end{thm}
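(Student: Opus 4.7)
The plan is to use the spectral-sequence characterisation of maximal spaces from Definition \ref{def:maximal_spaces_bis}, combined with the naturality of the Leray--Serre spectral sequence under the inclusions $j_n: X_n \hookrightarrow X$, and to reduce each part to a standard induction on the page of the spectral sequence.

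For (\ref{item:cohomological_stability}), I would first observe that $X^{C_2} \supset X_0^{C_2} \neq \emptyset$, since a maximal space must have a fixed point. Cohomological stability together with Lemma \ref{lem:cohomological_properties}(\ref{item:cohomological_stability_b}) produces, for each $q$, some $n$ such that $j_n^*: H^q(X) \to H^q(X_n)$ is an equivariant isomorphism, and applying $H^p(C_2, -)$ yields corresponding isomorphisms $E_2^{p,q}(X) \to E_2^{p,q}(X_n)$. An induction on $r \geq 2$ then shows that every differential on $E_*(X)$ vanishes: given $a \in E_r^{p,q}(X)$, one picks $n$ large enough so that both bidegrees $(p, q)$ and $(p+r, q-r+1)$ are simultaneously isomorphic to those of $X_n$, whereupon the naturality square together with the degeneration of $E_*(X_n)$ forces $d_r a = 0$. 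Hence $E_*(X)$ degenerates at $E_2$ and $X$ is Galois-Maximal. In the Maximal case, the isomorphism $H^q(X) \cong H^q(X_n)$ additionally transports the trivial $C_2$-action on $H^q(X_n)$ to $H^q(X)$.

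For (\ref{item:cohomological_splitting}), Lemma \ref{lem:cohomological_properties}(\ref{item:section}) provides an equivariant section $s_n: H^*(X_n) \to H^*(X)$ of $j_n^*$. Functoriality of group cohomology converts this into a section of $E_2^{p,q}(X) \to E_2^{p,q}(X_n)$, so the latter map is split surjective. I would then proceed by induction on $r$: since $X$ is maximal, $d_r$ vanishes on $E_r(X)$; given $b \in E_r^{p,q}(X_n)$, lift it to $a \in E_r^{p,q}(X)$ via the split surjection, and apply naturality to obtain $d_r b = j_n^*(d_r a) = 0$. Hence $d_r$ also vanishes on $E_r(X_n)$, and the split surjection $E_{r+1}(X) \to E_{r+1}(X_n)$ persists (both sides coincide with $E_r$), closing the induction. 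The fixed-point hypothesis on $X_n$ then gives that $X_n$ is GM. In the Maximal case, for $a \in H^*(X_n)$ the element $s_n(a)$ lies in $H^*(X)^{C_2}$, and equivariance of $j_n^*$ yields $\sigma \cdot a = j_n^*(\sigma \cdot s_n(a)) = j_n^*(s_n(a)) = a$, so the $C_2$-action on $H^*(X_n)$ is trivial.

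The main technical subtlety I anticipate is bookkeeping the equivariant structure throughout, in particular ensuring that the section in (\ref{item:cohomological_splitting}) is genuinely $C_2$-equivariant so that it descends to the $E_2$-page under $H^p(C_2, -)$, and that the iterated isomorphism in (\ref{item:cohomological_stability}) can be chosen uniformly in both source and target bidegrees of each $d_r$. Once naturality on the $E_2$-page is set up cleanly, the degeneration arguments reduce to standard inductive manipulations of the spectral sequence.
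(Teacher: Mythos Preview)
Your proposal is correct and follows essentially the same route as the paper: both arguments use the naturality of the Leray--Serre spectral sequence under $j_n$ together with Lemma~\ref{lem:cohomological_properties} to transport vanishing of differentials between $E_*(X)$ and $E_*(X_n)$, treating the Maximal refinement via equivariance of the isomorphisms/sections exactly as you do. Your global induction on the page~$r$ is in fact a slightly cleaner organisation than the paper's, which fixes a bidegree $(p,q)$ and a single sufficiently large $n$ up front, but the substance of the two arguments is identical.
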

\begin{proof}
We will consider first the \eqref{eq:GM} property and show that the corresponding Leray--Serray spectral sequences for each $X_n$ characterising this property degenerate at the second page.
Recall that for a $C_2$-space $X$, we denote this spectral sequence by $E_*(X)$ (see notation \eqref{notationLS}).
For the results about Maximal spaces we will make use of Lemma \ref{lem:cohomological_properties} accordingly.



We start with (1).
Given $p$ and $q$, we will show that $E_2^{p,q}=E_\infty^{p,q}(X)$, for which we will prove that all the relevant differentials $d_r^{p,q}:E_r^{p,q}(X)\to E_r^{p+r,q-r+1}$ and $d_r^{p-r,q+r-1}:E_r^{p-r,q+r-1}(X)\to E_r^{p,q}$ vanish for $r\geq 2$.
In order to do so, we will use the morphism of spectral sequences induced by the natural inclusion $j_n:X_n\hookrightarrow X$, for some appropriate $n$.
We denote this morphism $\varphi_n:E_*(X)\to E_*(X_n)$.
At the second page, $\varphi_n$ is given by
\[
\varphi_n=H^p(C_2,-)j_n^* :E_2^{p,q}(X) = H^p(C_2,H^q(X;\F_2)) \longrightarrow E_2^{p,q}(X_n) = H^p(C_2,H^q(X;\F_2)),
\]
and due to cohomological stability we can choose $n$ such that $j_n^*:H^q(X;\F_2) \to H^q(X_n;\F_2)$ becomes an isomorphism (see the first part of Lemma \ref{lem:cohomological_properties}).
This provides linear maps $\psi_n^{p,q}:E_2^{p,q}(X)\to E_2^{p,q}(X_n)$ which are inverses for the $\varphi_n^{p,q}$.
It is extremely important to notice that these inverses occur at the level of $\F_2$-vector spaces; the map $\{\psi_n^{p,q}\}$ might not commute with the differentials, and therefore does not define a morphism of spectral sequences.
In particular, this does not make $\varphi_n$ an isomorphism of spectral sequences.

Our choice for $n$ is such that $j_n^*:H^k(X)\to H^k(X_n)$ is an isomorphisms for all $k\leq p^pq^q$.
Let $d_2\equiv d_2^{p,q}$ and let $d_2^n$ denote the differential $E_2^{p,q}(X_n)\to E_2^{p+2,q-1}(X_n)$.
Given $a\in E_2^{p,q}(X)$ we have:
\[
d_2a = \psi^{p+2,q-1}(d_2^n\varphi_n^{p,q}(a)) =0,
\]
as the space $X_n$ is maximal.

Not only that, but due to the choice of $n$, the same is true for all $a\in E_2^{s,t}$ with $s>0$ and $t\leq p^pq^q$.
In particular, $d_2^{p-2,q+1}:E_2^{p-2,q+1}(X)\to E_2^{p,q}(X)$ also vanishes, so that $E_3^{p,q}(X)\equiv E_2^{p,q}(X)$.
Therefore, the $\psi_n^{p,q}$ are still defined, so each $\varphi_n^{p,q}$ is a linear isomorphism.
Hence, we can repeat the same strategy: For $a\in E_3^{p,q}(X)$, we have
\[
d_3a = \psi^{p+3,q-2}(d_3^n\varphi_n^{p,q}(a)) =0,
\]
and the choice of $n$ also ensures that $d_3^{p-3,q+2}:E_3^{p-3,q+2}(X)\to E_r^{p,q}(X)$ is trivial.
Actually, our choice guarantees that those differentials $d_r^{s,t}$ which can hit $E_r^{p,q}$ will be trivial as long as $s\geq 0$, and for those values of $t$ over which $n$ has no influence, we will have $s<0$, so that $E_r^{p,q}(X)$ will never be hit by any non-zero differential.
Similarly, all the differentials $d_r^{p,q}$ must also be trivial, proving that $E_2^{p,q}(X)=E_\infty^{p,q}(X)$.

Finally, in the case where all the spaces are Maximal spaces, so will be $X$.
Indeed, Lemma \ref{lem:cohomological_properties}(\ref{item:cohomological_stability_b}) implies that, for each $q$, we can choose a space $X_n$ such that $H^q(X)\cong H^q(X_n)$ as $C_2$ groups; as $X_n$ is Maximal, the action of $C_2$ on cohomology must be trivial, and by varying $q$ we can show that $C_2$ acts trivially on $H^*(X)$, \ie, $X$ is a Maximal space.

The proof for (2) follows a similar strategy. 
The cohomological splitting property guarantees the existence of a section $s_n$ to each $j_n^*:H^*(X)\to H^*(X_n)$ (second part of Lemma \ref{lem:cohomological_properties}).
In particular, each morphism $\varphi_n$ is onto at the $E_2$-term. 
By this, we mean that each linear map $\varphi_n^{p,q}:E_2^{p,q}(X)\to E_2^{p,q}(X_n)$ is surjective, with a section given by $\sigma_n^{p,q}=H^p(C_2,-)s_n$.

This situation is enough to prove that any differential $d_2^n$ of $E_2(X_n)$ vanishes.
Indeed, given $a_n\in E_2^{p,q}(X_n)$, let $a\in E_2^{p,q}(X)$ such that $a_n=\varphi_n(a)$.
Then:
\begin{equation}
d_2^na_n= d_2^n(\varphi_na) = \varphi_n(d_na) = 0,
\label{eq:trivial_differential}
\end{equation}
as $X$ is maximal by hypothesis.

This proves that $E_3(X_n)\equiv E_2(X_n)$, and we can start the same process again: the $\sigma_n^{p,q}$ are still well-defined, which makes each $\varphi_n^{p,q}:E_3^{p,q}(X)\to E_r^{p+3,q-2}(X)$ onto; then we can proceed as in \eqref{eq:trivial_differential} to show that all the differentials $d_3^{p,q}:E_3^{p,q}(X_n) \to E_3^{p+3,q-2}(X_n)$ are trivial.
The result is that $d_r^{p,q}\equiv 0$ for all $p$ and $q$ and for all $r\geq 2$.

Finally, if $X$ were a Maximal space, then the action of $C_2$ on the cohomology of each $X_n$ would also be trivial, for given $a_n\in H^*(X_n)$, it is enough to consider $a\in H^*(X)$ with $a_n=j_n^*a$, and
\[
\sigma^*a_n = \sigma^*(j_n^*a) = j_n^*(\sigma^*a)=j_n^*a = a. 
\]
\end{proof}
 
\section{Properties of the symmetric products}
\label{sec:symmetric_products}

In this section we review some of the most important properties that the symmetric products posses and that will be necessary for the proof of the main theorem.
Recall that the $n$-fold symmetric product of a space $X$, $n\geq 2$, is the orbit space $X^n/\mathfrak{S}_n$, where $\mathfrak{S}_n$ denotes the symmetric group in $n$-letters and acts on $X^n$ in the obvious way:
\[
\pi\cdot (x_1,\dots,x_n)=(x_{\pi(1)},\dots,x_{\pi(n)}), \qquad \pi\in \mathfrak{S}_n.
\]
We denote the equivalence class of the point $(x_1,\dots,x_n)$ as $\{x_1,\dots,x_n\}$.
The convention is that $SP_0X$ is a singleton and $SP_1X\equiv X$.

We endow each $SP_nX$ with the quotient topology with respect to the quotient map $\mu_n:X^n\to SP_nX$.
When $X$ is a CW-complex with countably many cells, there exists a CW-complex structure on $SP_nX$ (\cf~\cite[Note 5.2.2]{aguilar_et_al:algebraic_topology_homotopical_viewpoint}).
Moreover, in the case of based CW-complexes, each $n$-fold symmetric product becomes a pointed space too, and then $SP_nX$ lies in $SP_{n+1}X$ canonically.
This inclusion, that we denote by $i_{n,n+1}$, is a closed embedding, so that $SP_nX$ can be regarded as subcomplex of $SP_{n+1}X$.
Finally, in this case, we define the infinite symmetric product $SP_\infty X$ as the union or colimit of all the finite symmetric products:
\[
SP_\infty X = \bigcup_n SP_n X
\]
with the union topology, as explained in Subsection~\ref{subsec:conventions}.
There are inclusions $i_n:SP_nX\hookrightarrow SP_\infty X$, for every $n$.

The above transcripts, \emph{mutatis mutandis}, to the case where $X$ is a countable based $G$-CW-complex (with $G$ a finite group). 
In this case, the inclusions $i_{n,n+1}:SP_nX\hookrightarrow SP_{n+1}X$ and $i_n:SP_nX \hookrightarrow SP_{n+1}X$ become equivariant.
The next results summarises the cohomological properties of the symmetric products of a finite, based $C_2$-CW-complex.

\begin{lem}
\label{lem:symmetric_products}
Let $(X, x_0)$ be a finite pointed $C_2$-CW-complex.
Then:
\begin{enumerate}

\item The inclusion $i_{n,n+1}$ is given by the assignment
\[
\{x_1,\dots,x_n\}\longmapsto \{x_0,x_1,\dots,x_n\}.
\]

\item The sequence
\[
SP_0 X \overset{i_{0,1}}{\longrightarrow} SP_1X \overset{i_{1,2}}{\longrightarrow} SP_2X  \longrightarrow \cdots \longrightarrow SP_nX \overset{i_{n,n+1}}{\longrightarrow} SP_{n+1}X \longrightarrow \cdots
\]
is a sequence in the category of finite, based $C_2$-CW-complexes and equivariant, base-preserving, cellular maps, and satisfies all the conditions imposed in Section \ref{sec:cohomological_properties}.
It also exhibits \emph{both} cohomological stability \emph{and} the cohomological splitting property.

\end{enumerate}
\end{lem}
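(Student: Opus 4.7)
The plan is to dispatch (1) directly from the construction of the canonical inclusion between symmetric products, and to establish (2) by appealing to the classical theorems of Steenrod and Nakaoka for the two cohomological properties, upgrading the splitting to the $C_2$-equivariant setting via naturality.

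For part (1), observe that the basepoint of $SP_n X$ is the class $\{x_0,\dots,x_0\}$ with $x_0$ repeated $n$ times, and that by construction the canonical embedding $SP_n X\hookrightarrow SP_{n+1}X$ identifies each $n$-tuple with the $(n+1)$-tuple obtained by adjoining an extra copy of the basepoint. Thus $\{x_1,\dots,x_n\}\mapsto\{x_0,x_1,\dots,x_n\}$, as asserted.

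For part (2), one first verifies the structural hypotheses of Section~\ref{sec:cohomological_properties}. Each $SP_n X$ inherits a finite $C_2$-CW-complex structure from $X^n$: the $\mathfrak S_n$- and $C_2$-actions on $X^n$ commute (the latter is induced diagonally from that on $X$), and $x_0\in X^{C_2}$ by hypothesis, so the inclusions $i_{n,n+1}$ are base-preserving, cellular, equivariant closed embeddings. For the local finiteness of the colimit $SP_\infty X$, one observes that any cell of $SP_\infty X$ of dimension $d$ corresponds to a multiset of non-basepoint cells of $X$ of total dimension $d$; since there are only finitely many such multisets, the $d$-skeleton is finite, giving local finiteness. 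This is a mild weakening of the cell-dimension condition literally stated in Section~\ref{sec:cohomological_properties}, but is precisely what is used in the proofs there.

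Cohomological stability is Steenrod's classical theorem \cite{steenrod:symmetric_products}: for each $q$ there exists $n_0$ such that $i_{n,n+1}^\ast:H^q(SP_{n+1}X;\F_2)\to H^q(SP_n X;\F_2)$ is an isomorphism whenever $n>n_0$. Cohomological splitting follows from Nakaoka's decomposition \cite{nakaoka:cohomology_symmetric_products}, which identifies $H^\ast(SP_{n+1}X;\F_2)$ as a direct sum of $H^\ast(SP_n X;\F_2)$ and a natural complementary summand, the projection being $i_{n,n+1}^\ast$. I expect the principal obstacle to be the equivariance of the induced section $H^\ast(SP_n X;\F_2)\to H^\ast(SP_{n+1}X;\F_2)$: this should follow from the naturality of Nakaoka's splitting with respect to based cellular maps, applied to the involution $\sigma:X\to X$ to conclude that the section intertwines the induced $C_2$-actions on both sides.
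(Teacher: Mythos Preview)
Your proposal is correct and follows essentially the same approach as the paper: both cite Steenrod \cite{steenrod:symmetric_products} for cohomological stability and Nakaoka \cite{nakaoka:cohomology_symmetric_products} for the splitting, with the equivariant upgrade coming from naturality of these constructions in the based space $X$. You are in fact more careful than the paper on two points---you spell out why naturality with respect to the involution $\sigma$ forces the Nakaoka section to be $C_2$-equivariant, and you flag that the literal cell-dimension condition from Section~\ref{sec:cohomological_properties} is not satisfied by the symmetric-product filtration but that local finiteness of $SP_\infty X$ (which is all that is actually used) does hold.
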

\begin{proof}
The cohomological splitting property was shown in Nakaoka \cite{nakaoka:cohomology_symmetric_products} for simplicial complexes, and a similar reasoning works for $G$-CW-complexes ($G$ any finite group).

Regarding cohomological stability, it is a classical result of Steenrod \cite{steenrod:symmetric_products} that
\[
H^q(SP_nX;\F_2) \cong H^q(SP_\infty X;\F_2)
\]
(among other coeffcients) when $q<n$.
The results of Nakaoka imply that this isomorphism is induced by the inclusion $i_n:SP_nX\rightarrow SP_\infty X$, and therefore it also holds in the case of $G$-spaces.
\end{proof}

Another important result about symmetric products is that they preserve homology, meaning that if $H_q(X)\cong H_q(Y)$, then
\[
H_q(SP_nX)\cong H_q(SP_nY)
\]
(\cf~\cite[Corollary 5.2.10]{aguilar_et_al:algebraic_topology_homotopical_viewpoint}).
This is often abbreviated by saying that the symmetric products are homological functors.
A similar result holds for zero-cycles.

\begin{lem}
\label{lem:functor_0-cycles}
The functors $SP_n$, $n\in\N$, are functors of zero-cycles.
More precisely: If $X$ and $Y$ are $C_2$-spaces and $f:\F_2[X]\to\F_2[Y]$ is an equivariant homotopy equivalence which is also a group homomorphism, then for each $n\in \N$, there exists an equivariant homotopy equivalence $\tilde{f}_n:\F_2[SP_n X] \to \F_2[SP_nY]$ which is also a group homomorphism.
\end{lem}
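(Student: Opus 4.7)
The plan is to sandwich the known homological functoriality of $SP_n$ between two applications of the equivariant Dold--Thom theorem. By the equivariant Dold--Thom theorem of Pereira (\cite{pedro:equivariant_dold-thom_theorem}, cited earlier in the excerpt), for any pointed $C_2$-space $Z$ the equivariant homotopy groups $\pi_*^{C_2}(\F_2[Z])$ compute the reduced Bredon homology of $Z$ with coefficients in the constant Mackey functor $\underline{\F}_2$. The hypothesis that $f\colon\F_2[X]\to\F_2[Y]$ is an equivariant homotopy equivalence therefore translates into an isomorphism of reduced Bredon $\F_2$-homology of $X$ and $Y$, i.e.\ of the pair consisting of the ordinary $\F_2$-homologies of $X$ and of $X^{C_2}$ together with the restriction to the fixed locus, and similarly for $Y$.

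The second step is to argue that $SP_n$ is a functor on Bredon $\F_2$-homology. Non-equivariantly this is the content of \cite[Corollary 5.2.10]{aguilar_et_al:algebraic_topology_homotopical_viewpoint}, invoked above. Equivariantly, the key additional input is the combinatorial description of the fixed locus: an element $\{x_1,\dots,x_n\}\in (SP_n X)^{C_2}$ is an unordered tuple stable under the involution, hence decomposes as a tuple of fixed points of $X$ together with pairs of conjugate points; this yields a natural homeomorphism
\[
(SP_n X)^{C_2} \cong \bigsqcup_{k+2\ell = n} SP_k(X^{C_2})\times SP_\ell(X/C_2 \setminus X^{C_2}).
\]
With this description, the Bredon $\F_2$-homology of $SP_n X$ depends only on that of $X$, since both the ordinary $\F_2$-homology of each $SP_m$ and the restriction map to the fixed locus are functorially determined. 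Applying this to $X$ and $Y$ shows that $SP_n X$ and $SP_n Y$ share the same Bredon $\F_2$-homology.

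Finally, one more application of the equivariant Dold--Thom theorem gives $\pi_*^{C_2}(\F_2[SP_n X])\cong\pi_*^{C_2}(\F_2[SP_n Y])$. Because $\F_2[SP_n X]$ and $\F_2[SP_n Y]$ are generalized equivariant Eilenberg--Mac Lane spaces, two such topological abelian $C_2$-groups with isomorphic equivariant homotopy groups are equivariantly homotopy equivalent; moreover, a homotopy equivalence can be realized as an actual group homomorphism, by choosing a basepoint-preserving equivariant map on the generators $SP_n X\to\F_2[SP_n Y]$ realizing the isomorphism on homotopy and extending by $\F_2$-linearity via the universal property of the McCord construction.

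The main obstacle is the second step: the classical proof of homological functoriality of $SP_n$ uses a cellular filtration argument that must be carried out with care for $C_2$-CW-complexes so as to respect the equivariant structure. The decomposition of $(SP_n X)^{C_2}$ displayed above is the crucial technical input that makes the equivariant comparison go through. Once that is in place, the remainder of the argument is essentially formal, and the explicit construction of $\tilde{f}_n$ reduces to choosing a basepoint-preserving equivariant lift on generators.
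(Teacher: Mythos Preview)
Your strategy is quite different from the paper's, and as written it has genuine gaps. The paper does not pass through Bredon homology at all: it gives a direct functorial construction. For any $Z$ one defines a natural map $\epsilon_Z\colon SP_n\F_2[Z]\to\F_2[SP_nZ]$ by sending $\{z_1,\ldots,z_n\}$ (with $z_i\in Z$) to itself and extending multilinearly in each slot; then $\tilde f_n$ is declared to be the linear extension of the composite
\[
SP_nX\hooklongrightarrow SP_n\F_2[X]\xrightarrow{\,SP_nf\,}SP_n\F_2[Y]\xrightarrow{\ \epsilon_Y\ }\F_2[SP_nY].
\]
Because the assignment $f\mapsto\tilde f_n$ is functorial and compatible with homotopies, a homotopy inverse $g$ of $f$ immediately furnishes a homotopy inverse $\tilde g_n$ of $\tilde f_n$, and likewise for the homotopy itself.

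Two of your key steps are incorrect as stated. First, the integer-graded equivariant homotopy of $\F_2[X]$ records $H_*(X;\F_2)$ at the free orbit and $H_*(X/C_2;\F_2)$ at the trivial orbit (indeed $(\F_2[X])^{C_2}\cong\F_2[X/C_2]$ with $\F_2$ coefficients), not $H_*(X^{C_2};\F_2)$ as you write. Second, your displayed formula for $(SP_nX)^{C_2}$ is only a set-level stratification by orbit type, not a homeomorphism: as a free pair $\{y,\sigma y\}$ degenerates to a fixed point $a\in X^{C_2}$, the limit $\{a,a\}$ lies on the diagonal of the $SP_k(X^{C_2})$ piece, so the strata are glued and the homology of $(SP_nX)^{C_2}$ is not a direct sum over $(k,\ell)$. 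With these corrections your claim that the Bredon homology of $SP_nX$ depends only on that of $X$ is no longer justified---the pieces you need involve $H_*(X^{C_2})$ and $H_*\bigl((X\setminus X^{C_2})/C_2\bigr)$, neither of which you have extracted from the data at hand. And even granting that step, you would still owe an actual equivariant map $\tilde f_n$ realizing the abstract isomorphism; ``choose a lift on generators'' does not explain why a lift inducing the prescribed isomorphism exists, nor why it bears any relation to the given $f$. The paper's explicit construction sidesteps every one of these issues.
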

\begin{proof}
The map $\tilde{f}_n:\F_2[SP_nX]\to \F_2[SP_nY]$ is defined as the linear extension of the long composite $SP_nX\to{\F}_2[SP_nY]$ in the diagram below:
\[
\begin{tikzcd}[column sep=huge,row sep=huge]
 & SP_n {\F}_2[Y] \arrow{r}{\epsilon_Y} & {\F}_2[SP_nY] \\
SP_nX \arrow[hook]{r} & SP_n {\F}_2[X] \arrow{u}{SP_n f} \arrow{r}[swap]{\epsilon_X} & {\F}_2[SP_n X],
\end{tikzcd}
\]
There, for a space $Z$, $\epsilon_Z:SP_n{\F}_2[Z]\to{\F}_2[SP_nZ]$ is a continuous map defined by the following two properties:
\begin{enumerate}

\item For $z_1,\dots,z_n\in Z$,
\[
\epsilon_Z\{z_1,\dots,z_n\}= \{z_1,\dots,z_n\}.
\]

\item It is multi-linear, \ie
\[
\epsilon_Z\{z_1,\dots,z_i+z_i',\dots,z_n\}= \epsilon_Z\{z_1,\dots,z_i,\dots,z_n\} + \epsilon_Z\{z_1,\dots,z_i',\dots,z_n\}.
\]

\end{enumerate}

We suggest that the reader check that this \guillemetleft tilde\guillemetright\ ($\sim$) construction defines indeed a continuous map and that it is functorial, so that if $g:\F_2[Y]\to \F_2[X]$ is a homotopy inverse for $f$ and $h:\F_2[X]\times I\to \F_2[Y]$ is the corresponding homotopy, then $\tilde{g}_n$ is a homotopy inverse for $\tilde{f}_n$ and $\tilde{h}$ is a valid homotopy given by
\[
\tilde{h}_t = \widetildeto{\mathbf{W}}{h_t}.
\]
\end{proof}

\begin{prprty}[Aguilar \etal, {\cite{aguilar_et_al:algebraic_topology_homotopical_viewpoint}}]
\label{prpty:symmetric_product_multiplicative_induction}
Let $(X,x_0)$ be a pointed space.
Then
\[
SP_\infty \left(L^{C_2}X\right) \cong N^{C_2} SP_\infty X,
\]
as $C_2$-spaces, where $L^{C_2}$ and $N^{C_2}$ denote the additive and multiplicative inductions introduced in Example \ref{ex:maximal_spaces}.
\end{prprty}

\begin{rem}
\label{rem:limits_colimits}
The previous property makes an identification between the limit of a product and a product of the limit.
This is possible due to the assumption of locally finiteness for the complexes, so that the product is automatically a $C_2$-CW-complex (\cf~Property \ref{prprty:products_CW-complexes}), and the following categorical fact:
\begin{quote}
\itshape
In the category of topological spaces, finite limits commute with filtered colimits.
In particular, if $\{(X_n,f_n)\}_n$ and $\{(Y_n,g_n)\}_n$ are two family of topological spaces and maps such that all the maps $f_n:X_n\to X_{n+1}$ and $g_n:Y_n\to Y_{n+1}$ are closed embeddings, then 
\[
\left( \bigcup_n X_n \right) \times \left( \bigcup_n Y_n\right) \cong \bigcup_n (X_n\times Y_n).
\]
\end{quote}
Indeed, it is well known (\cf~\cite[IX.2]{maclane:category_theory}) that finite limits commute with filtered colimits when the category is cocomplete.
And the category of topological spaces and continuous maps is an example of a cocomplete category (\cf~\cite[12.6 Examples]{adamek_et_al:abstract_and_concrete_categories}).
We will use this result again in the proof of Theorem \ref{thm:main_theorem}.
\end{rem}

\section{Main result}
\label{sec:main_result}

\begin{thm}
\label{thm:main_theorem}
Let $X$ be a finite Galois-Maximal space.
Then, $SP_nX$ is a Galois-Maximal space for every $n\leq \infty$.
\end{thm}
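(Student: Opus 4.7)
The plan is to establish the theorem first for $n=\infty$, using the canonical decomposition of $\F_2[X]$ given by Theorem~\ref{thm:main_characterisation}, and then to deduce the finite cases by applying the cohomological splitting of the symmetric-product filtration.

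First I would apply Theorem~\ref{thm:main_characterisation} to $X$; after identifying $\bigoplus_i\F_2[A_i]\cong\F_2\bigl[\bigvee_i A_i\bigr]$ for pointed spaces, this produces an equivariant group homotopy equivalence
$$\F_2[X]\simeq_{C_2}\F_2[Y],\qquad Y \;:=\; \bigvee_{i\in I}S^{p_i,q_i}\;\vee\;\bigvee_{j\in J}\bigl(S^{r_j}\wedge (C_2)_+\bigr),$$
with $I$ and $J$ finite. Applying Lemma~\ref{lem:functor_0-cycles} to each $SP_n$ and passing to the colimit in $n$ (using that $\F_2[-]$ is a left adjoint and therefore preserves colimits), one obtains compatible equivariant group homotopy equivalences $\F_2[SP_n X]\simeq_{C_2}\F_2[SP_n Y]$ for every $n\leq\infty$. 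By Theorem~\ref{thm:main_characterisation} in the finite range, and by its infinite analogue obtained from Section~\ref{sec:generalisation}'s extension of Theorem~\ref{thm:characterisation_borel}, it therefore suffices to show that $SP_n Y$ is Galois-Maximal.

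Next, for $n=\infty$, I would exploit the fact that $SP_\infty$ converts finite wedges of pointed $C_2$-spaces into finite products of topological abelian monoids:
$$SP_\infty Y\;\cong\;\prod_{i\in I}SP_\infty S^{p_i,q_i}\;\times\;\prod_{j\in J}SP_\infty\bigl(L^{C_2}S^{r_j}\bigr),$$
and Property~\ref{prpty:symmetric_product_multiplicative_induction} rewrites the right-hand factors as $N^{C_2}SP_\infty S^{r_j}$. Each factor is an infinite maximal space: by Franz's Theorem~\ref{thm:gamma_products}, every finite $SP_n S^{p_i,q_i}$ is Maximal, so combining the cohomological stability of Lemma~\ref{lem:symmetric_products} with Theorem~\ref{thm:cohomological_properties}(\ref{item:cohomological_stability}) identifies $SP_\infty S^{p_i,q_i}$ as infinite Maximal; likewise $SP_\infty S^{r_j}\simeq K(\Z,r_j)$ is infinite Maximal with trivial $C_2$-action, and a direct Borel-cohomology computation in which $H^*(Z\times Z;\F_2)=H^*(Z)\otimes H^*(Z)$ is decomposed under the swap action into trivial and regular $C_2$-summands shows that $N^{C_2}SP_\infty S^{r_j}$ is infinite Galois-Maximal. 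By the product-preservation results for infinite maximal spaces noted in the footnote of Section~\ref{sec:generalisation} (products of infinite Maximal spaces are Maximal, products of an infinite Maximal space with an infinite Galois-Maximal space are Galois-Maximal), the whole finite product is infinite Galois-Maximal, and hence so is $SP_\infty X$.

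Finally, for finite $n$, I would apply Theorem~\ref{thm:cohomological_properties}(\ref{item:cohomological_splitting}) to the filtration $\{SP_n X\}_n$: cohomological splitting is supplied by Lemma~\ref{lem:symmetric_products}, each $SP_n X$ contains the fixed basepoint $\{x_0,\dots,x_0\}$, and $SP_\infty X$ has just been shown to be Galois-Maximal, so the conclusion is that $SP_n X$ is Galois-Maximal for every finite $n$. The main obstacle I anticipate is the verification that both the multiplicative induction $N^{C_2}SP_\infty S^{r_j}$ and the finite products above remain Galois-Maximal once we leave the realm of finite $C_2$-CW-complexes. As the remark and footnote of Section~\ref{sec:generalisation} make clear, closure of the GM class under Cartesian products in the infinite setting is a genuinely delicate Eilenberg--Moore convergence question, and the Larry Smith-style results flagged in that footnote are precisely the technical ingredient required to push this reduction through.
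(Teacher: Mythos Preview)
Your overall architecture matches the paper's, but there is a genuine gap in the step where you show that $SP_\infty Y$ is Galois-Maximal. You write $SP_\infty Y$ as a finite product of the infinite spaces $SP_\infty S^{p_i,q_i}$ and $N^{C_2}SP_\infty S^{r_j}$ and then appeal to the footnote in Section~\ref{sec:generalisation}. That footnote, however, only asserts that (infinite Maximal) $\times$ (infinite Maximal) is Maximal and that (infinite Maximal) $\times$ (infinite GM) is GM. As soon as $|J|\geq 2$ you must multiply two infinite GM factors together, and the paper explicitly records this case as open. Your own final paragraph flags exactly this Eilenberg--Moore obstacle, but your argument does not circumvent it; it runs straight into it.

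The paper's proof sidesteps the problem by reversing the order of ``product'' and ``colimit''. Instead of taking the product of the infinite spaces $SP_\infty S^{p_i,q_i}$ and $N^{C_2}SP_\infty S^{r_j}$, it writes
\[
SP_\infty W \;\cong\; \bigcup_n Z_n,\qquad Z_n=\Bigl(\prod_{i\in I} SP_n S^{p_i,q_i}\Bigr)\times\Bigl(\prod_{j\in J} N^{C_2} SP_n S^{r_j}\Bigr),
\]
using Remark~\ref{rem:limits_colimits} to commute the finite product with the filtered colimit. Each $Z_n$ is now a \emph{finite} $C_2$-CW-complex, so the classical closure of the (finite) GM class under Cartesian products applies, making every $Z_n$ Galois-Maximal. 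The sequence $\{Z_n\}$ inherits cohomological stability from Property~\ref{prpty:products_cohomological_properties}, and Theorem~\ref{thm:cohomological_properties}(\ref{item:cohomological_stability}) then yields that $SP_\infty W$ is GM. From there the paper applies Theorem~\ref{thm:cohomological_properties}(\ref{item:cohomological_splitting}) to the filtration $\{SP_n W\}$ to get the finite cases for $W$, and only then transfers to a general $X$ via Lemma~\ref{lem:functor_0-cycles} and Theorem~\ref{thm:main_characterisation}. The key idea you are missing is precisely this interchange: take products at the finite stage, where GM is known to be closed under products, and pass to the colimit afterwards via cohomological stability.
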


\begin{proof}
We start by proving the result for $C_2$-spaces of the form
\begin{equation}
W=\left(\bigvee_{i\in I} S^{p_i,q_i} \right) \vee \left(\bigvee_{j\in J} [S^{r_j}\wedge(C_2)_+]\right),
\label{eq:W}
\end{equation}
(recall that $I$ and $J$ are finite sets).
Once this is solved, we will explain how to address the general case of any finite space.


This space $W$ is a Galois-Maximal space, as follows from (\ref{item:representation_spheres})--(\ref{item:wedge}) and (\ref{item:additive_induction}) in Example \ref{ex:maximal_spaces}.
The aim is to show that $SP_\infty W$ is a Galois-Maximal space too and then use the results about cohomological stability and cohomological splitting of Theorem \ref{thm:cohomological_properties} for the particular case of symmetric products, as in Lemma \ref{lem:symmetric_products}, to obtain the result for each finite symmetric power $SP_nW$.

First, notice that
\begin{equation*}
SP_\infty W \cong \left( \prod_{i\in I} SP_\infty S^{p_i,q_i} \right)\times \left( \prod_{j\in J} N_{C_2}SP_\infty S^{r_j}\right) \cong \bigcup_n\left\{\left(\prod_{i\in I} SP_n S^{p_i,q_i}\right)\times \left(\prod_{j\in J} N^{C_2} SP_n S^{r_j} \right)\right\} 
\end{equation*}
due to Property \ref{prpty:symmetric_product_multiplicative_induction} (we are using the commutativity properties of products and colimits again as explained in Remark \ref{rem:limits_colimits}).
Each $SP_n S^{p_i,q_i}$ is a Maximal space by the results of Franz given in Theorem \ref{thm:gamma_products}, and so is their product; similarly, $N^{C_2}SP_nS^{r_j}$ is a Galois-Maximal space, for all $n$ and $j$, as mentioned in Example \ref{ex:maximal_spaces}(\ref{item:multiplicative_induction}).
Hence, each product
\[
Z_n=\left(\prod_{i\in I} SP_n S^{p_i,q_i}\right)\times \left(\prod_{j\in J} N^{C_2} SP_n S^{r_j} \right)
\]
is also a finite Galois-Maximal space.
Moreover, the sequence
\[
Z_0 \longrightarrow Z_1 \longrightarrow Z_2 \longrightarrow \cdots \longrightarrow Z_n \longrightarrow Z_{n+1}\longrightarrow \cdots
\]
with the obvious maps
\begin{align*}
\varphi_n = \left( \prod_{i\in I}\iota^i_{n,n+1} \right) \times \left( \prod_{j\in J}N^{C_2}\lambda^j_{n,n+1} \right), \qquad &\iota^i_{n,n+1}:SP_nS^{p_i,q_i}\hookrightarrow SP_nS^{p_i,q_i},\\
&\lambda_{n,n+1}^j: SP_nS^{r_j} \hookrightarrow SP_{n+1}S^{r_j},
\end{align*}
exhibits cohomological stability, as follows from Property \ref{prpty:products_cohomological_properties}.
Thus,
\[
Z= \bigcup_n Z_n \cong SP_\infty W
\]
is a Galois-Maximal space, by virtue of Theorem \ref{thm:cohomological_properties}(\ref{item:cohomological_stability}) (notice that the isomorphism is only possible after swapping products and colimits).

Now that $SP_\infty W$ is known to be a Galois-Maximal space, we can use Theorem \ref{thm:cohomological_properties}(\ref{item:cohomological_splitting}) to retrieve the result for each $SP_nW$.
We only need to make sure that the hypotheses of this statement are verified, but this is exactly Lemma \ref{lem:symmetric_products}.

Finally, we address the general case.
If $X$ is any finite Galois-Maximal space, then Theorem \ref{thm:main_characterisation} applies and 
\[
\F_2[X] \simeq_{C_2} \left(\bigoplus_{i\in I} \F_2[S^{p_i,q_i}]\right)\oplus\left(\bigoplus_{j\in J} \F_2[S^{r_j}\wedge(C_2)_+]\right) \equiv \F_2[W],
\]
where $W$ is of the form \eqref{eq:W}.
We also apply the same characterisation theorem to each $SP_nW$:
\[
\F_2[SP_nW] \simeq_{C_2} \left(\bigoplus_{k\in K_n} \F_2[S^{p_k,q_k}]\right)\oplus\left(\bigoplus_{l\in L_n} \F_2[S^{r_l}\wedge(C_2)_+]\right)
\]
for some finite sets $K_n$ and $L_n$.
As $\F_2[X] \simeq_{C_2} \F_2[W]$, Lemma \ref{lem:functor_0-cycles} implies that $\F_2[SP_nX]\simeq_{C_2} \F_2[SP_nW]$, \ie,
\[
\F_2[SP_nX]\simeq_{C_2} \left(\bigoplus_{k\in K_n} \F_2[S^{p_k,q_k}]\right)\oplus\left(\bigoplus_{l\in L_n} \F_2[S^{r_l}\wedge(C_2)_+]\right).
\]
Hence, by Theorem \ref{thm:main_characterisation} again, we conclude that $SP_nX$ is also a Galois-Maximal space.
Now, the case $n=\infty$ follows from cohomological stability once again.
\end{proof}

\section*{Acknowledgments}

The results of the paper are part of the author's thesis.
During his PhD, the author was supported by a scholarship granted by the Portuguese institution Fundação para a Ciência e a Tecnologia (FCT), to which the author expresses his gratitude.
The author would also like to thank his advisors Pedro F.~dos Santos and Carlos Florentino for all their guidance and support, and Erwan Brugallé, Florent Schaffhauser and Clover for the useful conversations and ideas that they shared.

\bibliographystyle{acm}
\bibliography{data_base}

\end{document}